\title{Saturation in the Hypercube and Bootstrap Percolation} 
\author{Natasha Morrison}
\author{Jonathan A. Noel}
\author{Alex Scott}
\affil{\normalsize{Mathematical Institute, University of Oxford, Andrew Wiles Building, Radcliffe Observatory Quarter, Woodstock Road, Oxford OX2 6GG, UK.}}
\affil{\texttt{\{morrison,noel,scott\}@maths.ox.ac.uk}}
\newtheorem{thm}[equation]{Theorem}
\newtheorem{lem}[equation]{Lemma}
\newtheorem{claim}[equation]{Claim}
\newtheorem{subclaim}[equation]{Subclaim}
\newtheorem{prob}[equation]{Problem}
\newtheorem{ques}[equation]{Question}
\theoremstyle{definition}
\newtheorem{obs}[equation]{Observation}
\theoremstyle{remark}
\newtheorem{rem}[equation]{Remark}
\newtheorem{step}{Step}
\newtheoremstyle{case}{}{}{\normalfont}{}{\itshape}{\normalfont:}{ }{}
\theoremstyle{case}
\newtheorem{case}{Case}
\newtheorem{case2}{Case}
\newcommand{\sat}{\operatorname{sat}}
\newcommand{\ex}{\operatorname{ex}}
\newcommand{\wsat}{\operatorname{wsat}}
\newcommand{\ssat}{\operatorname{ssat}}
\newcommand{\vspan}{\operatorname{span}}
\newcommand{\vdim}{\operatorname{dim}}
\numberwithin{equation}{section}
\begin{document}

\maketitle

\begin{abstract}
Let $Q_d$ denote the hypercube of dimension $d$. Given $d\geq m$, a 
spanning subgraph $G$ of $Q_d$ is said to be \emph{$(Q_d,Q_m)$-saturated} 
if it does not contain $Q_m$ as a subgraph but adding any edge of 
$E(Q_d)\setminus E(G)$ creates a copy of $Q_m$ in $G$. Answering a 
question of Johnson and Pinto~\cite{hypercube}, we show that for every 
fixed $m\geq2$ the minimum number of edges in a $(Q_d,Q_m)$-saturated 
graph is $\Theta(2^d)$.

We also study weak saturation, which is a form of bootstrap percolation. 
A spanning subgraph of $Q_d$ is said to be \emph{weakly $(Q_d,Q_m)$-saturated} 
if the edges of $E(Q_d)\setminus E(G)$ can 
be added to $G$ one at a time so that each added edge creates a new 
copy of $Q_m$.  Answering another question of Johnson and 
Pinto~\cite{hypercube}, we determine the minimum number of edges in a 
weakly $(Q_d,Q_m)$-saturated graph for all $d\geq m\geq1$. More generally, we determine the minimum number of edges in a subgraph of the $d$-dimensional grid $P_k^d$ which is weakly saturated with respect to `axis aligned' 
copies of a smaller grid $P_r^m$.  We also study weak saturation of cycles in the grid.
\end{abstract}

\section{Introduction}

Given graphs $F$ and $H$, a spanning subgraph $G$ of $F$ is said to be \emph{$(F,H)$-saturated} if it does not contain $H$ as a subgraph, but for every edge $e\in E(F)\setminus E(G)$, the graph $G+e$ contains a copy of $H$. In this language, the classical Tur\'{a}n problem asks for the maximum size of an $(F,H)$-saturated graph; this number is known as the \emph{extremal number}, denoted $\ex(F,H)$. Another well-studied problem, introduced independently by Zykov~\cite{Zykov} and Erd\H{o}s, Hajnal and Moon~\cite{EHM}, is to determine the minimum number of edges in an $(F,H)$-saturated graph. This is known as the \emph{saturation number}, denoted $\sat(F,H)$. A spanning subgraph $G$ of $F$ is said to be \emph{weakly $(F,H)$-saturated} if the edges of $E(F)\setminus E(G)$ can be added to $G$, one edge at a time, in such a way that every added edge creates a new copy of $H$. This notion was introduced by Bollob\'{a}s~\cite{wsat}. The minimum number of edges in a weakly $(F,H)$-saturated graph is known as the \emph{weak saturation number} and denoted $\wsat(F,H)$. Note that every $(F,H)$-saturated graph is also weakly $(F,H)$-saturated and so
\begin{equation}\label{wsatlower}\wsat(F,H)\leq \sat(F,H).\end{equation}
For additional background on minimum saturation in graphs, see~\cite{satsurvey}.

Given $k\geq2$ and $d\geq1$, the \emph{$k$-grid of dimension $d$} is the graph $P_k^d$ with vertex set $\{0,1,\dots,k-1\}^d$ where two vertices are adjacent 
if they differ by one in some coordinate and are equal in the other $d-1$ coordinates. 
In particular, $P_2^d$ is known the \emph{hypercube of dimension $d$} and denoted $Q_d$. In this paper, we are interested in saturation and weak saturation problems in hypercubes and grids. 

With regards to $\sat\left(Q_d,Q_m\right)$, the special case $m=2$ was studied by Choi and Guan~\cite{Choi} who constructed a $(Q_d,Q_2)$-saturated graph with at most $\left(\frac{1}{4}+o(1)\right)|E(Q_d)|$ edges. Santolupo (see~\cite{satsurvey}) conjectured that this construction is best possible. In their recent paper~\cite{hypercube}, Johnson and Pinto disproved this conjecture (in a strong sense) by showing that, for every fixed $m$, there exists a $(Q_d,Q_m)$-saturated graph with $o(|E(Q_d)|)$ edges. More precisely, they proved the following.

\begin{thm}[Johnson and Pinto~\cite{hypercube}]
\label{pinto}
For every fixed $m\geq2$, there exists $0<\varepsilon_m<1$ such that $\sat(Q_d,Q_m) = O\left(d^{(1-\varepsilon_m)}2^d\right) = O(|E(Q_d)|/d^{\varepsilon_m})$.
\end{thm}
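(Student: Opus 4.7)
The aim is to construct an explicit $(Q_d, Q_m)$-saturated graph with at most $C_m \cdot d^{1-\varepsilon_m} \cdot 2^d$ edges, i.e., to save a factor of $d^{\varepsilon_m}$ over the trivial bound $|E(Q_d)| = d \cdot 2^{d-1}$. The natural strategy is a two-scale construction: first design a compact ``gadget'' graph $H$ on a subcube $Q_r$ for an intermediate dimension $r = r(d,m)$ growing with $d$, and then tile $Q_d$ with many translates of $H$ using the remaining $d-r$ coordinates.

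For the tiling step, partition $[d]$ into blocks of size $r$ (assume for simplicity that $r \mid d$); for each block $B$ and each setting of the coordinates outside $B$, place a copy of $H$ on the corresponding $r$-dimensional subcube. If $|E(H)| \leq f(r) \cdot 2^r$, the gadgets contribute $(d/r) \cdot 2^{d-r} \cdot f(r) \cdot 2^r = (d \, f(r)/r) \cdot 2^d$ edges in total, so choosing $r$ so that $f(r)/r \leq d^{-\varepsilon_m}$ meets the target. Intra-block non-edges are saturated directly by the gadget property of $H$. Cross-block non-edges must be handled separately by adding a small set of ``linking'' edges: for each such non-edge $uv$, we need a $Q_m$-copy containing $uv$ whose other edges lie in gadgets plus links, which is feasible provided $H$ supports many ``ready-to-complete'' $Q_{m-1}$-subcubes at each vertex, allowing completions to be found inside a single block.

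The main obstacle is the gadget construction: producing a $(Q_r, Q_m)$-saturated graph $H$ with $|E(H)| = o(r \cdot 2^r)$. This is a genuine covering-design problem --- each of the $\Theta(r \cdot 2^r)$ non-edges of $Q_r \setminus H$ must be saturated by a $Q_m$-copy whose other edges all lie in $H$, while $H$ itself remains $Q_m$-free. A natural route is algebraic: define $H$ via an affine subspace of $\mathbb{F}_2^r$ or a low-weight linear code so that, for each direction $i \in [r]$, a structured family of $(m-1)$-subcubes simultaneously saturates all direction-$i$ non-edges, and argue $Q_m$-freeness via a dimension or parity invariant. Finally, one must verify that the fully tiled graph together with its linkers contains no $Q_m$; this is achieved by keeping the linkers sparse in each direction, so that no $m$-subset of coordinates is simultaneously ``free'' at a common base vertex.
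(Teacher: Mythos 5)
The statement in question is Theorem~\ref{pinto}, which the paper cites from Johnson and Pinto and does not reprove; what the paper actually proves is the stronger Theorem~\ref{main}, so I will compare your sketch against the construction in Section~\ref{satsection} (which subsumes Theorem~\ref{pinto}).

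Your proposal has two serious gaps. First, the gadget construction is the entire difficulty of the problem, and you do not actually produce it. Building a $(Q_r,Q_m)$-saturated graph $H$ on $Q_r$ with $o(r\cdot 2^r)$ edges is exactly the same problem you are trying to solve, at dimension $r$ instead of $d$; saying ``define $H$ via an affine subspace or low-weight code and argue $Q_m$-freeness via a parity invariant'' is a statement of intent, not a construction. Without an explicit $H$ and a proof of its $Q_m$-freeness and saturation, the argument is circular.

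Second, the tiling step does not preserve $Q_m$-freeness. If you place a copy of $H$ on every $r$-dimensional subcube aligned with every block, the resulting union $G$ can easily contain copies of $Q_m$ whose variable coordinates are split across several blocks: say $j_i$ of the $m$ variable coordinates fall in block $B_i$, with $\sum_i j_i = m$ and each $j_i < m$. Each block's contribution is only a $Q_{j_i}$, and $H$ being $Q_m$-free does not forbid $Q_{j_i}$ for $j_i<m$, so these smaller subcubes living inside the various copies of $H$ can assemble into a full $Q_m$ in $G$. Your proposed fix --- ``keeping the linkers sparse so that no $m$-subset of coordinates is simultaneously free'' --- addresses the wrong edges: the offending edges are the gadget edges themselves, not the linkers. (Relatedly, the worry about ``cross-block non-edges'' is vacuous in your setup: every edge of $Q_d$ lies in exactly one block direction, and if every $r$-subcube carries a full copy of $H$, every non-edge is saturated by the gadget on its own subcube; the linkers you introduce are not needed for saturation and only make $Q_m$-freeness harder.)

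The paper's construction is recognisably a fleshed-out version of the algebraic idea you gesture at, but the actual mechanism is quite different from block-tiling: it splits $[d]$ into $m$ long intervals plus a remainder, subdivides each interval into six subintervals of Hamming-code length $2^t-1$, classifies vertices by how many subintervals land in the Hamming code, adds edges according to a membership-and-parity rule, and --- crucially --- uses Conder's $3$-colouring of the cube's edges with no monochromatic $C_4$ or $C_6$ to break any stray copy of $Q_m$. The $Q_m$-freeness argument is a delicate multi-subclaim case analysis rather than a product-structure reduction, and there is no step where a locally-saturated gadget is replicated verbatim across independent subcubes.
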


In the case $m=2$, Johnson and Pinto~\cite{hypercube} obtained a stronger bound; namely, 
\begin{equation}\label{Q2bound}\sat(Q_d,Q_2) < 10\cdot 2^d.\end{equation}
That is, for every $d$, there exists a $(Q_d,Q_2)$-saturated graph with bounded average degree. Motivated by this result, they asked the following: for which fixed values of $m$ is
\[\sat(Q_d,Q_m) = O\left(2^d\right)?\]
We show that this is the case for every $m\geq2$.

\begin{thm}
\label{main}
For every fixed $m\geq2$, $\sat\left(Q_d,Q_m\right)\leq (1+o(1))72m^22^d$.
\end{thm}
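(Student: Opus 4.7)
The strategy is to exhibit an explicit spanning subgraph $G$ of $Q_d$ that is $(Q_d,Q_m)$-saturated and has at most $(1+o(1))72m^2 2^d$ edges. For saturation, every non-edge $e\in E(Q_d)\setminus E(G)$ must belong to an $m$-subcube $T\subseteq Q_d$ all of whose remaining $m\cdot 2^{m-1}-1$ edges lie in $G$, while $Q_m$-freeness demands that no $m$-subcube of $Q_d$ be fully contained in $G$. Since average degree $O(m^2)$ is required, each edge of $G$ must participate as a non-missing edge in many witness subcubes, forcing considerable sharing.

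The construction I would try is block-based. Partition the coordinates $\{1,\ldots,d\}$ into blocks of size about $m$. For each edge $e$ of $Q_d$, assign a distinguished witness $m$-subcube $T(e)$ whose $m$ directions are determined by $e$ together with the block containing the direction of $e$; this yields a family $\mathcal{T}=\{T(e):e\in E(Q_d)\}$ together with a designated edge $e(T)\in E(T)$ for each $T\in\mathcal{T}$. Define $G$ by taking the union of $E(T)\setminus\{e(T)\}$ over $T\in\mathcal{T}$, so that the non-edges of $G$ are precisely the designated edges $\{e(T):T\in\mathcal{T}\}$. Each $T$ contributes at most $m\cdot 2^{m-1}$ edges, but these are shared across the $\Theta(d/m)$ non-edges whose witness it is; the final count comes out as $(1+o(1))Cm^2 2^d$ for a constant $C\leq 72$ once the overlap is tallied.

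Saturation is then automatic: for every non-edge $e$, the subcube $T(e)$ is missing exactly $e$, so $G+e$ contains $T(e)\cong Q_m$. The main obstacle is $Q_m$-freeness: one must verify that $\bigcup_{T\in\mathcal{T}}(E(T)\setminus\{e(T)\})$ does not accidentally contain some $m$-subcube $T^*\notin\mathcal{T}$. This requires careful choice of both the family $\mathcal{T}$ and the designated edges, so that every $m$-subcube of $Q_d$ either coincides with some $T\in\mathcal{T}$ (and is thus missing $e(T)$) or contains some non-edge of $G$. The tension between covering all non-edges with witnesses (forcing many edges) and preventing unwanted $Q_m$'s (forcing enough holes) is what dictates the $72m^2$ constant: roughly, there are $O(m)$ choices within each block and $O(m)$ relevant blocks per vertex, contributing the $m^2$ factor, with the constant $72$ emerging from the specific combinatorial accounting of how the designated edges must be placed to avoid any rogue $Q_m$.
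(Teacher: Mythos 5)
There is a genuine gap here, in fact several, and the approach differs substantively from the paper's.

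First, the definition of $G$ is internally inconsistent. You set $G:=\bigcup_{T\in\mathcal{T}}\bigl(E(T)\setminus\{e(T)\}\bigr)$ and then assert "the non-edges of $G$ are precisely the designated edges $\{e(T):T\in\mathcal{T}\}$." But with this union definition, an edge $e$ is a non-edge only if \emph{every} $T\in\mathcal{T}$ containing $e$ designates $e$; being designated by a single $T$ is not enough. Since each edge lies in many subcubes, this forces a global consistency condition on the designations that your block rule does not obviously supply. Without it the set of non-edges can be much smaller than you claim.

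Second, the edge count is simply asserted. If blocks have size roughly $m$ and $T(e)$ varies over the block containing the direction of $e$, then each block $B$ gives $2^{d-m}$ witness subcubes, each contributing $m2^{m-1}-1$ edges to $G$; summing over $\approx d/m$ blocks already gives on the order of $d\cdot 2^{d-1}$ edges, i.e.\ essentially all of $E(Q_d)$, not $O(m^22^d)$. You gesture at a sharing factor of $\Theta(d/m)$, but a single witness subcube has only $m2^{m-1}$ edges (a constant in $d$), so it cannot serve as the unique missing-edge witness for $\Theta(d/m)$ distinct non-edges: if $|E(T)\cap D|\geq 2$ then $T$ witnesses none of them. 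The overlap accounting that is supposed to produce the $72m^2$ constant is never carried out and, with the block structure as described, does not produce it.

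Third, you explicitly defer the $Q_m$-freeness verification, which is the substantive part of any construction of this flavour. It is not a matter of "careful choice": once $G$ contains almost all edges needed for many witness subcubes, stray fully-present $m$-subcubes arise easily, and some structural device is needed to break them.

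The paper's argument avoids all three difficulties by not trying to build a saturated graph directly. Using a Hamming code $C\subseteq V(Q_{2^t-1})$, it defines classes $A_0,\dots,A_m,X$ so that $A_0$ contains all but $O(2^d/d)$ vertices. A graph $G$ is built whose edges all have at least one endpoint outside $A_0$; in particular $A_0$ is independent in $G$. Two properties are arranged: $G$ is $Q_m$-free (Claim 2.10), enforced by Conder's $3$-colouring of $Q_s$ with no monochromatic $C_4$ or $C_6$ together with a parity condition in Step 2; and for every edge $uv$ with $u,v\in A_0$, $G+uv$ contains a $Q_m$ (Claim 2.9). One then takes any maximal $Q_m$-free supergraph $G'\supseteq G$: this $G'$ is $(Q_d,Q_m)$-saturated, and since no $A_0$--$A_0$ edge can be added, $A_0$ remains independent in $G'$, so every edge of $G'$ is incident to the small set $V(Q_d)\setminus A_0$ and the total is bounded by $d\cdot|V(Q_d)\setminus A_0|\leq(1+o(1))72m^22^d$. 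The key ideas you would need to recover are the Hamming-code stratification (to make the "cheap" vertex class independent and huge) and the Conder colouring plus parity (to certify $Q_m$-freeness without controlling most of the graph). Your proposal has neither, and the gaps above would have to be filled by something equivalent.
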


We remark that, previously, the best known lower bound on $\sat(Q_d,Q_m)$ was $(m+1-o(1))2^{d-1}$, due to Johnson and Pinto~\cite{hypercube}. In particular, for fixed $m$, Theorem~\ref{main} is tight up to a (constant) factor of $O(m)$.

In the case of weak saturation, Johnson and Pinto~\cite{hypercube} proved
\begin{equation}\label{2^d-1}\wsat(Q_d,Q_2)=2^d-1\end{equation}
for all $d\geq2$ by exhibiting a spanning tree of $Q_d$ which is weakly $(Q_d,Q_2)$-saturated.  They  asked about the value of $\wsat(Q_d,Q_m)$ for general $d\geq m\geq1$. We answer this question. 

\begin{thm}
\label{wsatThmQ}
For $d\geq m\geq 1$, 
\[\wsat\left(Q_d,Q_m\right) = (m-1)2^d - \sum_{j=0}^{m-2}(m-1-j)\binom{d}{j}.\]
\end{thm}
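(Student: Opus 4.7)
The plan is to prove matching upper and lower bounds, both equal to
\[W(d,m) := \sum_{j=0}^{d} \min(j, m-1)\binom{d}{j},\]
which rearranges to the expression in the statement. For the \emph{upper bound} I exhibit an explicit construction. For each $v \in \{0,1\}^d$, let $i_1(v) < \dots < i_{|v|}(v)$ be the positions of the $1$'s of $v$, and let $G$ be the subgraph of $Q_d$ consisting of all edges $\{v, v - e_{i_k(v)}\}$ with $1 \leq k \leq \min(|v|, m-1)$. Summing over $v$ gives $|E(G)| = \sum_v \min(|v|, m-1) = W(d,m)$. To check that $G$ is weakly $(Q_d, Q_m)$-saturated, add the missing edges in increasing order of the level $|v|$ of the upper endpoint. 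When the missing edge $e = \{v, v - e_{i_k(v)}\}$ (with $k \geq m$) is added, use the $Q_m$-subcube $C_e$ with varying coordinates $\{i_1(v), \dots, i_{m-1}(v), i_k(v)\}$ and coordinates outside this set fixed to match $v$. A case analysis on the direction of each edge of $C_e$ shows that every such edge in direction $i_l(v)$ with $l \leq m-1$ belongs to $G$, while every edge in direction $i_k(v)$ either belongs to $G$ or has upper endpoint at level strictly below $|v|$ and so has already been added.

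For the \emph{lower bound}, I will apply the standard linear-algebraic framework for weak saturation. Namely, it suffices to construct a vector space $V$ and a map $\phi : E(Q_d) \to V$ such that (i) for every copy $C$ of $Q_m$ in $Q_d$ and every $e \in E(C)$, $\phi(e) \in \vspan\{\phi(e') : e' \in E(C),\ e' \neq e\}$; and (ii) $\dim \vspan \phi(E(Q_d)) \geq W(d,m)$. Condition (i) implies, by induction on the weak-saturation ordering, that $\vspan \phi(E(H)) = \vspan \phi(E(Q_d))$ for any weakly $(Q_d, Q_m)$-saturated $H$, hence $|E(H)| \geq \dim \vspan \phi(E(Q_d))$. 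For (ii) the natural certificate is that the $\phi$-values of the edges of the construction $G$ above form a basis, which pins down the dimension exactly since $|E(G)| = W(d,m)$.

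My intended construction of $\phi$ is recursive, via the decomposition $Q_d = Q_{d-1}^0 \cup Q_{d-1}^1 \cup M$, where $Q_{d-1}^b$ is the sub-hypercube with last coordinate $b$ and $M$ is the perfect matching in direction $d$. On each horizontal piece I use an independent instance of the inductively constructed map for $Q_{d-1}$. On the matching $M$, I identify each edge with the shared coordinate vector in $\{0,1\}^{d-1}$ and define $\phi$ via evaluations of multilinear polynomials in $x_1, \dots, x_{d-1}$ of degree at most $m-2$, thereby encoding the $Q_{m-1}$-bootstrap percolation relation on $V(Q_{d-1})$. The identity $\sum_{s \in \{0,1\}^{m-1}}(-1)^{|s|} p(w_s) = 0$ for every polynomial $p$ of degree $\leq m - 2$ then furnishes a relation among the $\phi$-values of the $2^{m-1}$ matching edges of any $Q_m$-copy whose varying coordinates include direction $d$. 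Tracking dimensions yields the recursion $W(d,m) = 2 W(d-1,m) + \sum_{j=0}^{m-2}\binom{d-1}{j}$ satisfied by the formula, with base case $\wsat(Q_m, Q_m) = m 2^{m-1} - 1$.

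The main obstacle is verifying condition (i) for the ``mixed'' $Q_m$-copies whose varying coordinates include direction $d$. Such a copy contains $2^{m-1}$ matching edges plus $(m-1)2^{m-2}$ horizontal edges in each of $Q_{d-1}^0$ and $Q_{d-1}^1$, and these horizontal edges form only a $Q_{m-1}$-subcube of each $Q_{d-1}^b$, not a $Q_m$. The inductive hypothesis applied to the horizontal pieces therefore does not by itself yield a relation among them. To overcome this, I plan to strengthen the inductive statement so that $\phi$ also satisfies a compatible relation on every $Q_{m-1}$-subcube of $Q_{d-1}$, and then show that the three contributions (from the two horizontal pieces and from $M$) cancel simultaneously inside every mixed $Q_m$-copy. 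Carrying out this compatibility check is the technical heart of the argument.
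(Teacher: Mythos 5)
Your upper bound is essentially the paper's construction (specialised to $k=r=2$): for each vertex $v$ include $\min(|v|,m-1)$ down-edges, then infect in order of increasing level. The paper leaves the choice of down-edges arbitrary; your "first $\min(|v|,m-1)$ one-positions" choice and the verification via the subcube $C_e$ are correct, and the count $\sum_v \min(|v|,m-1)$ rearranges to the stated formula. Your lower-bound \emph{framework} is also the one the paper uses, namely Lemma~\ref{LinAlgLemma}, but the construction you propose is genuinely different: you recurse on the decomposition $Q_d = Q_{d-1}^0\cup Q_{d-1}^1\cup M$, using two independent instances of $\phi_{d-1}$ on the horizontal pieces and the degree-$\le m-2$ polynomial space on $M$. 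The paper instead gives a single non-recursive construction: it takes $W=\bigoplus_{x\in V(Q_d)}\mathbb{R}^{m-1}$ and, for an edge $e=uv$ in direction $i$, places a generic vector $z_i\in\mathbb{R}^{m-1}$ in the two coordinates of $u$ and $v$. A $Q_m$-copy has $m$ directions, and any $m$ generic vectors in $\mathbb{R}^{m-1}$ admit a dependence with all coefficients non-zero; applying the same coefficient to every edge in a given direction makes all the vertex projections vanish. Linear independence of $\{f_e:e\in E(G)\}$ is then checked directly via a maximum-level argument.

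The gap you flag is real, and the repair you sketch cannot work. With independent copies of $W_{d-1}$ for the two horizontal slices, a mixed $Q_m$-copy whose varying directions are $I'\cup\{d\}$ has its $W_{d-1}^{(0)}$-projection supported only on the $Q_{m-1}$-subcube $C_0$ with varying directions $I'$ inside $Q_{d-1}^0$. Condition (i) for each horizontal edge of that subcube then forces, for every $e\in E(C_0)$, that $\phi_{d-1}(e)\in\operatorname{span}\{\phi_{d-1}(e'):e'\in E(C_0),\,e'\neq e\}$. But this is precisely the hypothesis of Lemma~\ref{LinAlgLemma} for the family of all $Q_{m-1}$-subcubes of $Q_{d-1}$, and its proof (running the weak-saturation process edge by edge) gives $\dim\operatorname{span}\phi_{d-1}\bigl(E(Q_{d-1})\bigr)\le\operatorname{wsat}(Q_{d-1},Q_{m-1})=W(d-1,m-1)$. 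Since $W(d-1,m-1)=W(d-1,m)-\sum_{j\ge m-1}\binom{d-1}{j}<W(d-1,m)$, this is incompatible with the $\dim\phi_{d-1}=W(d-1,m)$ needed to close the recursion $W(d,m)=2W(d-1,m)+\sum_{j=0}^{m-2}\binom{d-1}{j}$. Using a \emph{shared} copy of $W_{d-1}$ for both slices avoids this objection but then yields $\dim W_d\le W(d-1,m)+\sum_{j=0}^{m-2}\binom{d-1}{j}$, which is strictly less than $W(d,m)$. So the recursion cannot be closed in the form you propose; some mechanism must make the horizontal contributions cancel without imposing linear dependence on every $Q_{m-1}$-subcube. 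The paper's vertex-indexed construction is one such mechanism and sidesteps the recursion entirely.
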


By combining Theorem~\ref{wsatThmQ} with  (\ref{wsatlower}), we can improve the lower bound for $\sat(Q_d,Q_m)$ mentioned above to $(m-1-o(1))2^d$.  

Theorem~\ref{wsatThmQ} is a special case of a more general result. Given $k\geq r\geq2$ and $d\geq m\geq1$, say that a copy of $P_r^m$ in $P_k^d$ is \emph{axis aligned} if it is induced by a set of vertices of the form $I_1\times \dots \times I_d$ where exactly $m$ of the sets $I_i$ are intervals of length $r$ in $\{0,\dots,k-1\}$ and the rest are singletons. Let $\wsat^*\left(P_k^d,P_r^m\right)$ be the minimum number of edges in a spanning subgraph $G$ of $P_k^d$ such that the edges of $E\left(P_k^d\right)\setminus E(G)$ can be added to $G$, one edge at a time, such that every added edge creates an axis aligned copy of $P_r^m$. We prove the following.

\begin{thm}
\label{wsatThm}
For $k\geq r\geq 2$ and $d\geq m\geq 1$, 
\[\wsat^*\left(P_k^d,P_r^m\right) = \sum_{j=0}^d\sum_{i=0}^{d-j}(m-1+i)\binom{d}{j}\binom{d-j}{i}(k-r+1)^j(r-2)^i\]
\[-\sum_{j=0}^{m-2}\sum_{i=0}^{d-j}(m-1-j)\binom{d}{j}\binom{d-j}{i}(k-r+1)^j(r-2)^i,\]
where, by convention, $0^0=1$.
\end{thm}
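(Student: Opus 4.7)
The plan is to establish matching upper and lower bounds on $\wsat^*(P_k^d,P_r^m)$ via an explicit construction (for the upper bound) and a linear-algebraic dependency argument (for the lower bound). A useful preliminary reformulation, obtained by trinomial expansion using $(k-r+1)+(r-2)+1=k$, is
\[
\wsat^*(P_k^d,P_r^m) = (m-1)k^d + d(r-2)k^{d-1} - \sum_{j=0}^{m-2}(m-1-j)\binom{d}{j}(k-r+1)^j(r-1)^{d-j},
\]
which isolates an ``interior'' main term and a ``boundary'' correction; both parts of the proof will be organized around this decomposition.

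For the upper bound I would proceed by induction on $m$. In the base case $m=1$ the axis-aligned $P_r$-copies are just length-$r$ sub-paths inside the $dk^{d-1}$ axis-parallel lines of $P_k^d$; placing $r-2$ consecutive ``seed'' edges in each line and then extending outward one edge at a time realizes the target count $d(r-2)k^{d-1}$ and witnesses weak saturation line by line. For the inductive step, view an axis-aligned $P_r^m$ as $r$ parallel axis-aligned $P_r^{m-1}$'s joined by edges in a distinguished direction. Combining, in a canonical way across the $d$ possible new directions, an axis-aligned $(P_k^{d-1},P_r^{m-1})$-weakly-saturated skeleton (from the inductive hypothesis) inside each slab with a carefully chosen collection of ``cross'' edges yields a graph of the right size; an edge ordering that processes edges ``outward'' (first through the lowest-dimensional face where the completion is forced, then increasing codimension) verifies that each added edge completes a new axis-aligned $P_r^m$.

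For the lower bound I would use the classical algebraic framework: define a map $\phi:E(P_k^d)\to V$ such that for every axis-aligned copy $H$ of $P_r^m$ the set $\{\phi(e):e\in E(H)\}$ is dependent with each element in the span of the rest. Then, in any legal weak-saturation order, the $\phi$-image of $E(G)$ must span $\phi(E(P_k^d))$, giving $|E(G)|\geq \dim\mathrm{span}\,\phi(E(P_k^d))$. A natural attempt is to assign generic vectors $\mathbf{x}_v\in\mathbb{R}^{m-1}$ at the vertices and, for each edge $e=\{u,v\}$ in direction $t$, set $\phi(e)=(\mathbf{x}_u\wedge\mathbf{x}_v)\otimes \mathbf{e}_t\in \Lambda^2\mathbb{R}^{m-1}\otimes\mathbb{R}^d$ (with an extra direct summand to encode the direction $t$ independently); within any axis-aligned $P_r^m$ the $r^m\ge 2^m>m-1$ vertex vectors must be dependent, and these dependencies can be lifted to the wedge products indexing the edges of $H$. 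Computing the rank of $\phi$ by treating each direction separately and accounting for the shared-endpoint identifications across directions should reproduce the claimed formula.

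The main obstacle is the fine tuning of $\phi$: the bulk term $(m-1)k^d+d(r-2)k^{d-1}$ arises from counting generic contributions per vertex and per interior direction-slot, but the correction $\sum_{j=0}^{m-2}(m-1-j)\binom{d}{j}(k-r+1)^j(r-1)^{d-j}$ reflects a codimension-$j$ boundary effect corresponding to axis-aligned faces of $P_k^d$ of dimension $d-j$, where fewer independent dependencies are available. Engineering $\phi$ so that the dimension drop on these boundary strata is exactly right, and simultaneously matching the construction's edge count on those same strata, is the most delicate step and is where the bulk of the technical work would lie.
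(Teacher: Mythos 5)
Your reformulation of the target value as $(m-1)k^d + d(r-2)k^{d-1} - \sum_{j=0}^{m-2}(m-1-j)\binom{d}{j}(k-r+1)^j(r-1)^{d-j}$ is correct (trinomial expansion checks out), and you correctly identify the overall strategy: an explicit construction for the upper bound plus a linear-algebraic dependency argument in the spirit of Balogh--Bollob\'as--Morris--Riordan for the lower bound. But both halves, as written, have genuine gaps.

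For the upper bound you propose induction on $m$, viewing $P_r^m$ as $r$ parallel $P_r^{m-1}$'s, and then ``canonically combining'' $(d-1)$-dimensional skeletons with cross edges. This is not executed: you do not specify the cross edges or verify the edge count, and the inductive decomposition does not obviously produce the correction term, which lives on codimension-$j$ boundary strata rather than slabs. The paper instead gives a direct (non-inductive) construction: for each vertex $v$, classify its coordinates as large (value $\geq r-1$) or small, include all downward edges in small directions and exactly $\min\{L(v),m-1\}$ downward edges in large directions, then add the remaining edges in order of increasing $|v|$, noting that every added edge from $v$ has a large coordinate and so completes an axis-aligned $P_r^m$ with $v$ as the top corner. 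Your inductive route might be salvageable, but as stated it is a sketch of a sketch.

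The more serious gap is in the lower bound. Your proposed $\phi(e) = (\mathbf{x}_u\wedge\mathbf{x}_v)\otimes\mathbf{e}_t$ (with an extra direct summand for the direction) encodes the endpoints and the direction of $e$, but carries no information about the \emph{position of $e$ along its line}. For $r>2$ this is fatal: already in the case $m=1$, the answer is $d(r-2)k^{d-1}$, i.e.\ $r-2$ independent dimensions per line, and your map collapses to (at most) one dimension per direction, not per line position. So the relations coming from axis-aligned $P_r^m$'s would over-identify edges, and the rank of $\mathrm{span}\,\phi(E(P_k^d))$ would fall well short of the required bound. You flag this as ``the most delicate step,'' but it is not merely delicate --- the stated $\phi$ cannot give the right rank. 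The paper's choice of $W$ is structurally different: it is a direct sum of a copy of $\mathbb{R}^{m-1}$ for each \emph{vertex} and a copy of $\mathbb{R}^{r-2}$ for each \emph{line}, with $f_e$ supported only on the two endpoints (contributing a generic vector $z_i$ depending on the direction $i$ of $e$) and on the unique line through $e$ (contributing a vector $y_t$ depending on the position $t$ along the line, where the $y_t$ satisfy a sliding-window relation: any $r-1$ consecutive $y_j$ sum to zero and any $r-2$ consecutive ones are independent). The per-line $\mathbb{R}^{r-2}$ factor with the $y_t$ recurrence is exactly what captures the length-$r$ sub-path structure that your wedge-product map misses, and the independence of $\{f_e:e\in E(G)\}$ is then verified by looking at a maximal-$|v|$ vertex of positive degree and reading off either the $\pi_v$ projection (generic $z$'s) or a $\pi_{\mathcal{L}}$ projection (independent consecutive $y$'s).
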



We remark that if $r=2$ or $d=m$, then every copy of $P_r^m$ in $P_k^d$ is axis aligned. Thus, in these cases, $\wsat^*\left(P_k^d,P_m^r\right) = \wsat\left(P_k^d,P_m^r\right)$. In particular, Theorem~\ref{wsatThmQ} is implied by the case $k=r=2$ of Theorem~\ref{wsatThm}. 

We also consider an extension of (\ref{2^d-1}) to general even cycles in the grid, proving the following. 

\begin{thm}
\label{cycleThm}
For $k\geq2$ and $d\geq \ell\geq2$, $\wsat\left(P_k^d,C_{2\ell}\right) = k^d-1$.
\end{thm}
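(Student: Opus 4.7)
The plan has two parts. For the lower bound $\wsat(P_k^d,C_{2\ell})\geq k^d-1$, I would use the standard connectivity argument: when an edge $uv$ is added and creates a copy of $C_{2\ell}$, the path $C_{2\ell}-uv$ already joins $u$ and $v$, so the addition does not merge components. Hence the starting graph and $P_k^d$ have the same number of connected components; since $P_k^d$ is connected, the starting graph is connected and so has at least $k^d-1$ edges.

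For the upper bound I would induct on $d$ with base case $d=\ell$. In the inductive step ($d>\ell$), decompose $P_k^d\cong P_k^{d-1}\Box P_k$ into slices $L_0,L_1,\dots,L_{k-1}$ connected by axis-$d$ edges, each slice isomorphic to $P_k^{d-1}$. By the inductive hypothesis there is a weakly $(L_0,C_{2\ell})$-saturated spanning tree $T_0$ of $L_0$ with $k^{d-1}-1$ edges; let $T:=T_0\cup\{\text{axis-}d\text{ edges of }P_k^d\}$, so $|E(T)|=(k^{d-1}-1)+(k-1)k^{d-1}=k^d-1$. Add non-tree edges in two phases: first apply the inductive hypothesis in $L_0$ to add every non-tree edge of $L_0$ (each addition creates a $C_{2\ell}$ inside $L_0\subseteq P_k^d$); then, for $v=1,2,\dots,k-1$ in order, add every edge of $L_v$. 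For each such edge $e=\{(u,v),(u',v)\}$, with $u'=u+e_j$ in $P_k^{d-1}$, use the $2\ell$-cycle consisting of $e$, the two vertical tree edges $(u,v)-(u,v-1)$ and $(u',v-1)-(u',v)$, and a simple path of length $2\ell-3$ from $u'$ to $u$ inside $L_{v-1}$. Such a path exists because every edge of $L_{v-1}$ is present at this stage, and since $d-1\geq\ell-1$ there are at least $\ell-2$ axes of $P_k^{d-1}$ other than $j$ along which to take a simple ``out-and-back'' detour (choosing a sign on each axis to keep the path inside the grid, which is possible because $k\geq 2$).

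The main obstacle is the base case $d=\ell$, where the inductive hypothesis is unavailable ($P_k^{\ell-1}$ need not even contain a copy of $C_{2\ell}$). Here I would take the \emph{iterated comb} spanning tree $T$ of $P_k^\ell$ consisting of every axis-$1$ edge together with, for each $i=2,\dots,\ell$, the axis-$i$ edges whose smaller endpoint has $x_1=\cdots=x_{i-1}=0$; a direct count gives $|E(T)|=k^\ell-1$. A short calculation shows that any non-tree axis-$i$ edge at $x$ has tree-distance $2\sum_{j<i}x_j+1$ between its endpoints, so any such edge with $\sum_{j<i}x_j=\ell-1$ can be added first via its fundamental $C_{2\ell}$ in $T$; such seed edges always exist at $i=\ell$, for every $k\geq 2$, since $\sum_{j<\ell}x_j=\ell-1$ is achievable. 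From these seeds I would bootstrap, adding the remaining non-tree edges via $2\ell$-cycles formed by tree edges together with previously added edges. Producing an explicit schedule that closes for every admissible $k$ is the delicate step; I would attempt this by processing non-tree edges in an order controlled by the quantity $\sum_{j<i}x_j$ (where larger values require more bootstrapping), showing by induction that every non-tree edge can eventually be added.
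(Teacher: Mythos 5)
Your lower bound (connectivity) and your inductive step for $d>\ell$ are correct and match the paper's argument essentially verbatim: the paper also slices $P_k^d$ along the last coordinate, keeps a weakly saturated subgraph on the bottom slice plus all axis-$d$ edges, and completes each slice $t$ by a $C_{2\ell}$ consisting of the new edge, two axis-$d$ edges, and a path of length $2\ell-3$ in the already-completed slice $t-1$; your ``staircase'' detour on $\ell-2$ spare axes is exactly why such a path exists.

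The gap is in the base case $d=\ell$. You set this up as a single base case covering all $k\ge 2$ via the iterated comb spanning tree, identify the seed edges (those whose fundamental cycle in the tree already has length $2\ell$), and then write that ``producing an explicit schedule that closes for every admissible $k$ is the delicate step,'' proposing to order the remaining edges by $\sum_{j<i}x_j$. This last step is not a routine verification and is not carried out: non-tree edges with $\sum_{j<i}x_j<\ell-1$ have fundamental cycles that are too \emph{short}, so you must lengthen them by detouring through previously-added edges, while those with $\sum_{j<i}x_j>\ell-1$ have cycles that are too long and must be shortcut; neither direction is handled, and for $k=2$ there is only a single seed edge, so the bootstrap genuinely has to do all the work. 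As written, the proof is incomplete at precisely this point.

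The paper avoids this difficulty by running the induction on $d+k$ rather than on $d$ alone. The only non-inductive base case is then the hypercube $k=2$, $d=\ell$, for which it uses a level-structured (Hamming-weight-layered) spanning tree, not the iterated comb, together with an explicit four-phase edge-addition schedule. The remaining base situation $d=\ell$, $k>2$ is handled by a second inductive case that reduces $P_k^\ell$ to $P_{k-1}^\ell$ via a decomposition of the vertex set by which coordinates equal $k-1$. So the two proofs agree on the lower bound and on the $d>\ell$ step, but differ on how they dispose of $d=\ell$: the paper trades your single uniform base case for a clean double induction, and this is exactly the part your proposal leaves unfinished. If you want to keep your single-parameter induction, you would need to actually prove the bootstrap schedule for the iterated comb for all $k\ge2$; otherwise the simplest fix is to adopt the paper's $d+k$ induction.
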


The rest of the paper is structured as follows.  At the beginning of Section~\ref{satsection}, we review some fundamental properties of hypercubes which will be used in the proof of Theorem~\ref{main}. Then, we prove Theorem~\ref{main} by giving an explicit construction of a $(Q_d,Q_m)$-saturated graph with bounded average degree. In Section~\ref{wsatSection}, we turn our attention to weak saturation and prove Theorems~\ref{wsatThmQ},~\ref{wsatThm} and~\ref{cycleThm}. We conclude in Section~\ref{concl} by mentioning a number of open problems.

\section{Minimum Saturation in the Hypercube}
\label{satsection}

\subsection{Preliminaries}

Let $e_j$ denote the $j$th standard basis vector in $\mathbb{F}_2^d$; ie., the vector in which the $j$th coordinate is equal to one and every other coordinate is zero. Given a vertex $v$ of $Q_d$, let $|v|$ denote the sum of its coordinates. A basic fact about hypercubes is that, for $d\geq m$, every copy $Q$ of $Q_m$ in $Q_d$ is induced by a set of vertices of the form $\left\{v+\sum_{j\in J'}e_j:J'\subseteq J\right\}$ where $v$ is a fixed vertex and  $J\subseteq [d]$ has cardinality $m$. We say that the coordinates of $J$ and $[d]\setminus J$ are \emph{variable} and \emph{fixed} under $Q$, respectively.

We will need a standard result from Coding Theory, due to Hamming~\cite{Hamming} (for another reference, see~\cite{codebook,codebook2}). This result was also used by 
Johnson and Pinto~\cite{hypercube} (and we use some ideas from~\cite{hypercube} in our proof).

\begin{thm}[Hamming~\cite{Hamming}]
\label{hamThm}
There is an independent set $C\subseteq V\left(Q_{2^t-1}\right)$ such that every vertex of $V\left(Q_{2^t-1}\right)\setminus C$ has a unique neighbour in $C$. 
\end{thm}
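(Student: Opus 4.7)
The plan is to construct $C$ explicitly as the binary Hamming code of length $n := 2^t - 1$, realised as the kernel of a parity-check matrix, and then verify the two required properties directly from the construction.

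I would first set things up as follows. Let $H$ be the $t \times n$ matrix over $\mathbb{F}_2$ whose columns $H_1,\dots,H_n$ are precisely the $n$ nonzero vectors of $\mathbb{F}_2^t$, taken in some fixed order. Set $C := \{x \in \mathbb{F}_2^n : Hx = 0\}$ and identify $\mathbb{F}_2^n$ with $V(Q_n)$ in the obvious way, so that Hamming distance corresponds to graph distance. The two structural facts I will use are that $H$ has no zero column and that its columns are pairwise distinct.

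The verification then proceeds in two short steps. For independence: if adjacent $u,v \in C$ existed, then $u+v = e_i$ for some coordinate $i$, forcing $H_i = He_i = 0$, a contradiction. For the unique-neighbour property: given $v \notin C$, the syndrome $s := Hv$ lies in $\mathbb{F}_2^t \setminus \{0\}$, so $s = H_i$ for some $i$; then $H(v+e_i) = s + H_i = 0$, which places $v + e_i$ in $C$ and exhibits a neighbour of $v$ in $C$. If $v + e_i$ and $v + e_j$ both lay in $C$ with $i \neq j$, then $e_i + e_j \in C$, giving $H_i = H_j$ and contradicting distinctness of the columns.

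There is no real mathematical obstacle beyond recognising that the right object is the classical Hamming code; once the parity-check matrix is written down, every step is essentially a one-line calculation. An alternative route would be enumerative: any $C$ with the stated properties satisfies $|C| + (2^t - 1)|C| = 2^n$, so $|C| = 2^{n-t}$, and one can look for a linear subspace of this dimension with minimum weight at least $3$. Existence then reduces exactly to finding a full-rank $t \times n$ matrix over $\mathbb{F}_2$ with nonzero and distinct columns, which is the parity-check matrix above, so this second approach ultimately folds back into the same construction. I would present the direct argument for brevity.
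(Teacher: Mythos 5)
The paper does not prove this theorem---it simply cites Hamming~\cite{Hamming} and standard coding-theory references, so there is no ``paper's proof'' to compare against. Your parity-check-matrix construction and the two verifications (independence via a zero column, unique covering via syndrome decoding together with linearity of $C$) are the standard and correct argument for this classical fact, and the proof is complete.
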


A set $C$ as in Theorem~\ref{hamThm} is often referred to as a \emph{Hamming code}. We remark that, since every vertex of $Q_{2^t-1}$ has exactly $2^t-1$ neighbours, 
\begin{equation}\label{Cbound}|C| = \frac{2^{2^t-1}}{(2^t-1)+1} = 2^{2^t-t-1}.\end{equation}

Another ingredient of our proof of Theorem~\ref{main} is the following result of Conder~\cite{Conder}.

\begin{thm}[Conder~\cite{Conder}]
\label{3colours}
For every $s\geq1$, there is a $3$-colouring of the edges of $Q_s$ in which there is no monochromatic cycle of length $4$ or $6$.
\end{thm}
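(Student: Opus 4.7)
The plan is to produce an explicit $3$-colouring $c \colon E(Q_s) \to \mathbb{Z}/3\mathbb{Z}$. For an edge $\{u,v\}$ whose unique differing coordinate is $j$, set $c(\{u,v\}) := \phi_j(v)$, where $\phi_j(v) := \sum_{i<j} v_i \pmod 3$; this is independent of the choice of endpoint, since the $j$th coordinate is excluded from the sum. The key invariant is that $\phi_j(v + e_\ell) - \phi_j(v) \equiv 1 \pmod 3$ when $\ell < j$, and is $0$ when $\ell > j$.

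For a $C_4$: any such cycle lies in a $2$-face spanned by two directions $i < j$, and its two $j$-edges have $j$-zero endpoints that differ by $e_i$. By the invariant their colours differ mod $3$, so the $4$-cycle is not monochromatic.

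For a $C_6$: a short parity argument (each direction must appear an even number of times around the cycle, and no two consecutive edges of a simple cycle can share a direction) forces the direction-multiplicity pattern to be $2+2+2$. So every $C_6$ sits inside a $3$-face spanned by some directions $i<j<k$, equalling that $Q_3$ minus an antipodal pair $\{w, w + e_i + e_j + e_k\}$; in particular it contains exactly two of the four $j$-edges of this $Q_3$. Now the four $j$-edges of the $Q_3$ have $j$-zero endpoints $v, v+e_i, v+e_k, v+e_i+e_k$ (with $v$ the all-zero base of the $3$-face), hence $\phi_j$-values $B, B+1, B, B+1$, splitting $2+2$ into two colour classes. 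A direct computation shows the two $j$-edges incident to the antipodal pair have $j$-zero endpoints differing by $e_i + e_k$, so the invariant forces these two removed edges into different classes. Therefore exactly one edge of each colour class survives in the hexagon, and the $C_6$ cannot be monochromatic.

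The step I expect to take the most care is the hexagon analysis: reducing to a ``$Q_3$ minus an antipodal pair'' and then tracking the $\phi_j$-values of the four $j$-edges of that $Q_3$. The non-obvious observation is that the antipodal structure of the removed pair guarantees exactly one $j$-edge from each colour class is eliminated; once this is in place the conclusion is forced, and the $C_4$ case is the same argument compressed to a $2$-face.
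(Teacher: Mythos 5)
Your $C_4$ argument is fine (up to the minor slip that $\phi_j(v+e_\ell)-\phi_j(v) \equiv \pm 1 \pmod 3$ when $\ell<j$, not necessarily $+1$; all you need is that it is nonzero mod $3$). The $C_6$ argument, however, rests on a false structural claim, and in fact the proposed colouring itself does not work.

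The reduction to ``$Q_3$ minus an antipodal pair'' is incorrect. A simple $C_6$ inside a $3$-face uses $6$ of its $8$ vertices, but the two omitted vertices need not be antipodal: they may be adjacent. Indeed $Q_3$ contains $16$ six-cycles: $4$ of them omit an antipodal pair, while the other $12$ omit an adjacent pair $\{w,w+e_a\}$ (one such cycle for each edge of $Q_3$, namely the unique Hamilton cycle of $Q_3 - \{w,w+e_a\}$, which also drops the edge between the two degree-$3$ vertices of that induced subgraph). For instance, in $Q_3$ the cycle $010 \to 011 \to 001 \to 101 \to 111 \to 110 \to 010$ omits $\{000,100\}$, which are adjacent. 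Your argument only handles the antipodal case and is silent on these.

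This gap is fatal: the adjacent case with the pair differing in the largest varying direction $k$ is exactly where your $j$-edge argument breaks. There the two removed $j$-edges of the $Q_3$ differ only by $e_k$, which does not affect $\phi_j$, so they lie in the \emph{same} colour class, and hence so do the two surviving $j$-edges. The same happens with the $i$- and $k$-edges, and by tuning the fixed coordinates of the $3$-face one can make all three constant colours agree. Concretely, take $s=6$ and the six-cycle
\[
111000 \;\to\; 111100 \;\to\; 011100 \;\to\; 011110 \;\to\; 111110 \;\to\; 111010 \;\to\; 111000,
\]
whose edges lie in directions $4,1,5,1,4,5$ respectively. Both coordinate-$1$ edges have colour $\phi_1=0$; both coordinate-$4$ edges have colour $\phi_4 = v_1+v_2+v_3 = 1+1+1 \equiv 0 \pmod 3$; and both coordinate-$5$ edges have colour $\phi_5 = v_1+v_2+v_3+v_4$, equal to $0+1+1+1 \equiv 0$ and $1+1+1+0 \equiv 0$. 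So this $C_6$ is monochromatic under your colouring. The construction itself must be replaced; repairing the case analysis is not enough.
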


\subsection{Definitions and Proof Outline}

Our construction will use Theorems~\ref{hamThm} and~\ref{3colours} to build a spanning subgraph $G$ of $Q_d$ which contains no copy of $Q_m$ and which can be extended to a graph $G'$ which is $(Q_d,Q_m)$-saturated and satisfies the bound in Theorem~\ref{main}.

Throughout the proof, let $m\geq 2$ be fixed and let $d$ be an integer which we may choose to be sufficiently large. Let $t$ and $s$ be the unique integers such that
\begin{equation}\label{dval}d = 6m(2^t-1) + s\text{ and }0\leq s < 6m2^t.\end{equation}
We view the set $[d]$ as a union of $m$ intervals of length $6(2^t-1)$ indexed by the elements of $[m]$ followed by one interval of length $s$. Given $v\in V(Q_d)$ and $1\leq i\leq m$,  let $v(i)$ denote the vertex of $V(Q_{6(2^t-1)})$ obtained by restricting $v$ to the coordinates of the interval corresponding to $i$, and we let $v(m+1)$ be the vertex of $V(Q_s)$ obtained by restricting $v$ to its last $s$ coordinates. Each of the first $m$ intervals of $[d]$ is further divided into $6$ subintervals of length $(2^t-1)$ indexed by the pairs $(r,\gamma)$ where $r\in \{0,1\}$ and $\gamma\in\{0,1,2\}$. For $(i,r,\gamma)\in[m]\times\{0,1\}\times\{0,1,2\}$, let $v(i,r,\gamma)$ be the vertex of $V(Q_{2^t-1})$ obtained by restricting $v(i)$ to the subinterval corresponding to $(r,\gamma)$. 

Let $C$ be a subset of $V\left(Q_{2^t-1}\right)$ as in Theorem~\ref{hamThm}. We will treat the vertices  $v\in V\left(Q_d\right)$ differently depending on which of the triples $(i,r,\gamma)\in [m]\times \{0,1\}\times \{0,1,2\}$ satisfy $v(i,r,\gamma)\in C$. For starters, let $X$ denote the set of all vertices $v$ for which there exists some $i$ and $(r,\gamma)\neq (r',\gamma')$ such that $v(i,r,\gamma),v(i,r',\gamma')\in C$. The vertices of $X$ will be isolated in $G$ and will not play a large role in the construction. Now, divide the vertices of $V\left(Q_d\right)\setminus X$ into sets $A_0,\dots,A_m$ where $A_j$ is defined to be the set of all vertices of $V\left(Q_d\right)\setminus X$ for which there are exactly $j$ triples $(i,r,\gamma)$ such that $v(i,r,\gamma)\in C$. We remark that, by (\ref{Cbound}) and (\ref{dval}),
\begin{equation}\label{allButA0}\left|V\left(Q_d\right)\setminus A_0\right| = O\left(2^d/d\right)\end{equation}

Our goal is to construct a graph $G$ which does not contain $Q_m$ as a subgraph and does not contain any edge $uv$ of $Q_d$ where $u,v\in A_0$ such that, for any such edge, the graph $G+uv$ contains a copy of $Q_m$. Given such a graph $G$, let $G'$ be a graph obtained by adding a maximal set of edges to $G$ without creating a copy of $Q_m$. Then $G'$ is $\left(Q_d,Q_m\right)$-saturated and every edge of $G'$ has at least one endpoint in $V\left(Q_d\right)\setminus A_0$. By (\ref{allButA0}), the total number of edges incident to $V\left(Q_d\right)\setminus A_0$ is at most $O\left(2^d\right)$ and so we obtain $\sat(Q_d,Q_m)\leq \left|E\left(G'\right)\right| = O\left(2^d\right)$, as desired. 

The difficulty of the proof is to ensure that $G+e$ contains a copy of $Q_m$ for every edge $e$ of $Q_d$ joining two vertices of $A_0$ while simultaneously maintaining the property that $G$ does not contain a copy of $Q_m$. For the latter, we will apply Theorem~\ref{3colours} and a parity condition to ``break'' any potential copies of $Q_m$ in $G$. In what follows, let $\phi_1:E\left(Q_{6(2^t-1)}\right)\to\{0,1,2\}$ and $\phi_2:E\left(Q_s\right)\to\{0,1,2\}$ be colourings as in Theorem~\ref{3colours}, and let $\phi:E\left(Q_{6(2^t-1)}\right)\sqcup E\left(Q_s\right)\to\{0,1,2\}$ be the mapping which is equal to $\phi_1$ on $E\left(Q_{6(2^t-1)}\right)$ and equal to $\phi_2$ on $E\left(Q_s\right)$.

\subsection{The Construction}

We now give the details of our construction. 

\begin{step}
\label{dominate}
Add to $G$ every edge of $Q_d$ which joins a vertex of $A_j$ to a vertex of $A_{j+1}$ for $0\leq j\leq m-2$. 
\end{step}

\begin{step}
\label{parity}
Suppose that $uv$ is an edge of $Q_d$ such that $u,v\in A_j$ for some $1\leq j\leq m-1$. Let $k$ be the unique element of $[m+1]$ such that $u(k)\neq v(k)$. We add the edge $uv$ to $G$ if 
\begin{enumerate}[(i)]
\item either $k=m+1$ or there does not exist $(r',\gamma')\in \{0,1\}\times\{0,1,2\}$ such that $u(k,r',\gamma')=v(k,r',\gamma')\in C$,
\end{enumerate}
and for every $(i,r,\gamma)\in [m]\times \{0,1\}\times\{0,1,2\}$ such that $i\neq k$ and $v(i,r,\gamma)\in C$ we have
\begin{enumerate}[(i)]
\addtocounter{enumi}{1}
\item $\gamma = \phi(u(k)v(k))$,  and 
\item\label{mod2} $|v| + |v(k)| + |v(i)| +j-1 \equiv r\mod 2$.
\end{enumerate}
Note that (\ref{mod2}) is well defined since $u$ and $v$ differ only on a coordinate of the interval corresponding to $k$ and so for every $i\neq k$ we have
\[|v| + |v(k)| +|v(i)|\equiv |u|+|u(k)|+|u(i)| \mod 2.\]
\end{step}

Before moving on we make a few observations, each of which can be verified by looking carefully at Steps~\ref{dominate} and~\ref{parity}.

\begin{obs}
\label{isolates}
$A_m\cup X$ is a set of isolated vertices in $G$. 
\end{obs}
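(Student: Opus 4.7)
The plan is simply to trace through Steps~\ref{dominate} and~\ref{parity}, which are the only two mechanisms by which edges enter $G$, and to verify that in each step both endpoints of any added edge must lie in $\bigcup_{j=0}^{m-1} A_j$. Since this union is disjoint from $A_m \cup X$, it will follow that no edge of $G$ is incident to any vertex of $A_m \cup X$, which is exactly the claim.

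Concretely, I would argue as follows. In Step~\ref{dominate}, the added edges connect a vertex of $A_j$ to a vertex of $A_{j+1}$ for some $0\leq j\leq m-2$, so both endpoints lie in $A_0\cup A_1\cup\dots\cup A_{m-1}$. In Step~\ref{parity}, the added edges connect two vertices of the same $A_j$ for some $1\leq j\leq m-1$, so both endpoints again lie in $A_1\cup\dots\cup A_{m-1}$. In neither case is $A_m$ or $X$ ever an allowed location for an endpoint, which yields the observation immediately.

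There is really no substantive obstacle here; the only thing to be careful about is bookkeeping. One must confirm that neither step silently permits the index $j=m$, which is ruled out by the explicit upper limits $m-2$ and $m-1$, and that vertices of $X$ are excluded from both steps. The latter is automatic because the sets $A_0,\dots,A_m$ are defined as a partition of $V(Q_d)\setminus X$, so no vertex of $X$ is ever an eligible endpoint in either step. Hence the plan reduces to these two short inspections.
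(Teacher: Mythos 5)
Your proposal is correct and matches the paper's intent exactly: the paper states that this observation "can be verified by looking carefully at Steps~\ref{dominate} and~\ref{parity}," which is precisely the inspection you carry out, checking that both steps only add edges with endpoints in $A_0\cup\dots\cup A_{m-1}$. Nothing further is needed.
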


\begin{obs}
\label{A0stable}
$A_0$ is an independent set of $G$.
\end{obs}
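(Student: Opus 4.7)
The plan is simply to inspect the two construction steps and observe that neither can contribute an edge both of whose endpoints lie in $A_0$. Every edge of $G$ was added in either Step~\ref{dominate} or Step~\ref{parity}, so it suffices to rule out each of these.

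For Step~\ref{dominate}, the edges added are precisely those joining $A_j$ to $A_{j+1}$ for some $0\le j\le m-2$. Such an edge has endpoints in two different classes $A_j$ and $A_{j+1}$, so it cannot have both endpoints in $A_0$. (An edge from $A_0$ added in this step ends in $A_1$, not in $A_0$.)

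For Step~\ref{parity}, the step only considers pairs $uv$ with $u,v\in A_j$ for some $1\le j\le m-1$, so the case $j=0$ is excluded outright and no edge with both endpoints in $A_0$ is ever added at this step.

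I do not expect any obstacle here: the observation is an immediate structural consequence of the way Steps~\ref{dominate} and~\ref{parity} are written, and the only thing to verify is the index ranges ($0\le j\le m-2$ for the ``dominating'' step and $1\le j\le m-1$ for the ``parity'' step), both of which preclude edges internal to $A_0$. Consequently $A_0$ is an independent set of $G$.
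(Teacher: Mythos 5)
Your proof is correct and matches the paper's approach, which is simply inspection of Steps~\ref{dominate} and~\ref{parity}: the former only joins consecutive classes $A_j$ and $A_{j+1}$, and the latter only considers $u,v\in A_j$ with $j\geq 1$, so neither produces an edge internal to $A_0$.
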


\begin{obs}
\label{Cfixes}
If $uv$ is an edge of $G$ such that $u(i,r,\gamma)=v(i,r,\gamma)\in C$, then $u(i,r',\gamma') = v(i,r',\gamma')$ for every pair $(r',\gamma')\in\{0,1\}\times\{0,1,2\}$. 
\end{obs}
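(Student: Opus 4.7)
The plan is a short argument by contradiction that pinpoints which step of the construction could have added $uv$ and shows that step was explicitly forbidden. Suppose $uv\in E(G)$ satisfies $u(i,r,\gamma)=v(i,r,\gamma)\in C$, and suppose, for contradiction, that $u(i,r',\gamma')\neq v(i,r',\gamma')$ for some $(r',\gamma')\in\{0,1\}\times\{0,1,2\}$.  Since $uv$ is an edge of $Q_d$, the vertices $u$ and $v$ differ in exactly one coordinate, and by our assumption that coordinate lies in the interval corresponding to $i$; so (in the notation of Step~\ref{parity}) we have $k=i$, and the differing coordinate lies in some subinterval $(r'',\gamma'')\neq(r,\gamma)$.

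Next I would determine which class each endpoint lies in.  Because $uv\in E(G)$, Observation~\ref{isolates} gives $u,v\notin X$.  Combined with the hypothesis $u(i,r,\gamma)\in C$, the definition of $X$ forces $u(i,r''',\gamma''')\notin C$ for every $(r''',\gamma''')\neq(r,\gamma)$, and symmetrically for $v$; in particular $u(i,r'',\gamma''),v(i,r'',\gamma'')\notin C$.  Since $u$ and $v$ agree on every subinterval other than $(i,r'',\gamma'')$, they thus have exactly the same set of triples on which their restriction lies in $C$, and so $u,v\in A_j$ for a common $j\geq 1$.

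Finally, since $u$ and $v$ belong to the same class $A_j$, the edge $uv$ was not added by Step~\ref{dominate}, which only joins consecutive classes.  Therefore $uv$ was added by Step~\ref{parity}, so condition~(i) of that step must hold: either $k=m+1$ or there is no pair $(r^*,\gamma^*)$ with $u(k,r^*,\gamma^*)=v(k,r^*,\gamma^*)\in C$.  But $k=i\in[m]$, so $k\neq m+1$, and the pair $(r,\gamma)$ itself witnesses the forbidden configuration.  This contradicts condition~(i), and so the differing coordinate cannot lie in interval $i$, giving $u(i)=v(i)$ and hence $u(i,r',\gamma')=v(i,r',\gamma')$ for every $(r',\gamma')$, as required.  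The only mild subtlety is recognising that condition~(i) of Step~\ref{parity} was tailor-made to rule out precisely this scenario; everything else is routine bookkeeping from the construction.
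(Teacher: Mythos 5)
Your argument is correct and is essentially the verification the paper leaves implicit (the paper only says the observation ``can be verified by looking carefully at Steps~\ref{dominate} and~\ref{parity}''). You correctly identify that a counterexample would force $k=i$ with the differing coordinate in a subinterval $(r'',\gamma'')\neq(r,\gamma)$, use Observation~\ref{isolates} and the definition of $X$ to place $u,v$ in a common $A_j$ with $j\geq 1$, and then observe that Step~\ref{parity}'s condition~(i) is violated by the pair $(r,\gamma)$. One tiny point you elide: you assert the edge ``was added by Step~\ref{parity}'' after ruling out Step~\ref{dominate}, but Step~\ref{parity} applies only for $1\le j\le m-1$; the remaining case $j=m$ is excluded because then $u,v\in A_m$ would be isolated by Observation~\ref{isolates}, contradicting $uv\in E(G)$ -- worth a half-sentence, though it does not affect correctness.
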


\begin{obs}
\label{m+1change}
If $uv$ is an edge of $G$ such that $u(m+1)\neq v(m+1)$, then $u,v\in A_j$ for some $1\leq j\leq m-1$.
\end{obs}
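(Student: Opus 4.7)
The plan is to read the conclusion directly off the construction, using the earlier observations as shortcuts. Since $uv \in E(Q_d)$, the vertices $u$ and $v$ differ in exactly one coordinate, and by hypothesis that coordinate lies in the interval of $[d]$ indexed by $m+1$. Hence $u(i) = v(i)$ for every $i \in [m]$, which in turn forces $u(i,r,\gamma) = v(i,r,\gamma)$ for every triple $(i,r,\gamma) \in [m] \times \{0,1\} \times \{0,1,2\}$.

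It follows immediately that the set of triples $(i,r,\gamma)$ for which the restriction lies in $C$ is identical for $u$ and for $v$. Since the partition of $V(Q_d)$ into $X$ and $A_0, A_1, \dots, A_m$ depends only on that set of triples, $u$ and $v$ belong to the same class of this partition. By Observation~\ref{isolates}, neither endpoint of an edge of $G$ can lie in $X \cup A_m$, and by Observation~\ref{A0stable} they cannot both lie in $A_0$. This leaves $u,v \in A_j$ for some $1 \leq j \leq m-1$, as required.

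For completeness I would also note that the edge $uv$ could not have been added in Step~\ref{dominate}, whose edges go strictly between distinct classes $A_j$ and $A_{j+1}$, contradicting what was just deduced; so $uv$ was added in Step~\ref{parity}, whose hypotheses already restrict $j$ to the range $1 \leq j \leq m-1$. Since the whole argument only unpacks definitions and invokes the earlier observations, there is no real obstacle; the only point to be careful about is recognising that the membership of $A_j$ (and of $X$) depends solely on the restrictions $v(i,r,\gamma)$ for $i \in [m]$, and is therefore insensitive to a single coordinate flip in the $(m+1)$st interval.
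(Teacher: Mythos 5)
Your proof is correct and matches the intended (unwritten) justification in the paper: the paper groups this observation with several others that it says ``can be verified by looking carefully at Steps~\ref{dominate} and~\ref{parity},'' and the key point you identify---that membership in $X$ and in the $A_j$'s depends only on the restrictions $v(i,r,\gamma)$ for $i\in[m]$, so flipping a coordinate in the $(m+1)$st interval keeps both endpoints in the same class, whence by Observations~\ref{isolates} and~\ref{A0stable} the class must be $A_j$ with $1\le j\le m-1$---is exactly the right argument. Your closing remark that the edge therefore must have been added in Step~\ref{parity} (and Step~\ref{parity}'s hypothesis already enforces $1\le j\le m-1$) is a clean equivalent way to finish.
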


The next observation follows from  the fact that $C$ is an independent set in $Q_{2^t-1}$. 

\begin{obs}
\label{ctoc}
If $uv$ is an edge of $G$ such that $u(i,r,\gamma)\neq v(i,r,\gamma)$, then at most one of $u(i,r,\gamma)$ or $v(i,r,\gamma)$ is in $C$. 
\end{obs}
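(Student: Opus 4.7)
The plan is to unwind the hypothesis in terms of the coordinate decomposition of $Q_d$ and then apply the independence of $C$ directly. Since $G$ is a spanning subgraph of $Q_d$, any edge $uv\in E(G)$ is also an edge of $Q_d$, so $u$ and $v$ differ in exactly one coordinate of $[d]$; call this coordinate $c$.

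Next, I would case on the location of $c$. If $c$ lies in the final interval of length $s$, or in some interval corresponding to $i'\in[m]$ with $i'\neq i$, or in the interval corresponding to $i$ but in a subinterval $(r',\gamma')\neq(r,\gamma)$, then restricting to the subinterval $(i,r,\gamma)$ yields $u(i,r,\gamma)=v(i,r,\gamma)$, contradicting the assumption of the observation. Hence $c$ must lie in the subinterval of $[d]$ indexed by $(i,r,\gamma)$.

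In this remaining case, $u(i,r,\gamma)$ and $v(i,r,\gamma)$ are vertices of $Q_{2^t-1}$ that differ in exactly one coordinate, so they are adjacent in $Q_{2^t-1}$. Since $C$ is an independent set in $Q_{2^t-1}$ by Theorem~\ref{hamThm}, at most one of the two endpoints of this edge can lie in $C$, which is precisely the conclusion. There is no real obstacle here; the work has already been done in setting up the coordinate partition of $[d]$ and in quoting Hamming's theorem.
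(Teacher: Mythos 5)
Your proof is correct and matches the paper's (one-line) justification exactly: the paper simply remarks that the observation follows from the independence of $C$ in $Q_{2^t-1}$, and you have spelled out the bookkeeping showing that the differing coordinate must lie in the $(i,r,\gamma)$ subinterval so that $u(i,r,\gamma)$ and $v(i,r,\gamma)$ are adjacent.
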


To complete the proof of Theorem~\ref{main}, it suffices to establish the following two claims. We state these claims now and show that they imply Theorem~\ref{main} before proving the claims themselves.

\begin{claim}
\label{saturated}
For every edge $uv$ of $Q_d$ such that $u,v\in A_0$, the graph $G+uv$ contains a copy of $Q_m$.
\end{claim}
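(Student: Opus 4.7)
The plan is to exhibit an explicit copy of $Q_m$ in $G+uv$ built around the edge $uv$. Let $c$ be the unique coordinate in which $u$ and $v$ differ, and let $k\in[m+1]$ be the index of the interval of $[d]$ containing $c$. Fix a set $I\subseteq[m]$ of size $m-1$ with $k\notin I$: when $k\le m$ take $I=[m]\setminus\{k\}$, and when $k=m+1$ take $I=[m-1]$. For each $i\in I$, define
\[
\gamma_i := \phi\bigl(u(k)v(k)\bigr)\in\{0,1,2\}
\qquad\text{and}\qquad
r_i\in\{0,1\}\ \text{with}\ r_i \equiv |u|+|v(k)|+|u(i)|+1\pmod 2,
\]
and let $c_i$ be the unique coordinate of the subinterval of $[d]$ indexed by $(i,r_i,\gamma_i)$ for which $u(i,r_i,\gamma_i)+e_{c_i}\in C$. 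Such a $c_i$ exists and is unique by Theorem~\ref{hamThm}, since $u(i,r_i,\gamma_i)\notin C$ (because $u\in A_0$).

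The coordinates $c$ and $\{c_i:i\in I\}$ lie in $m$ distinct intervals of $[d]$, and so they span a copy of $Q_m$ in $Q_d$. I parametrize the vertices of this cube by pairs $(S,\epsilon)$ with $S\subseteq I$ and $\epsilon\in\{0,1\}$, setting
\[
w_{S,\epsilon} := u + \epsilon\,e_c + \sum_{i\in S} e_{c_i},
\]
so that $w_{\emptyset,0}=u$ and $w_{\emptyset,1}=v$. By construction the restrictions of $w_{S,\epsilon}$ to the subintervals of $[d]$ that lie in $C$ are exactly the triples $(i,r_i,\gamma_i)$ for $i\in S$; hence $w_{S,\epsilon}\in A_{|S|}\setminus X$, and since $|S|\le m-1$ no vertex of the cube belongs to $A_m$.

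It remains to check that every edge of this cube other than $uv$ belongs to $G$. Edges of the form $w_{S,\epsilon}w_{S\cup\{j\},\epsilon}$ (flipping some $c_j$) go between $A_{|S|}$ and $A_{|S|+1}$ with $|S|+1\le m-1$, and so are added to $G$ by Step~\ref{dominate}. Edges of the form $w_{S,0}w_{S,1}$ (flipping $c$) lie within $A_{|S|}$: for $S=\emptyset$ this edge is exactly $uv$, while for $S\ne\emptyset$ I verify the three conditions of Step~\ref{parity}. Condition (i) holds because $u,v\in A_0$ forces $u(k,r',\gamma')=v(k,r',\gamma')\notin C$ for every $(r',\gamma')$ outside the subinterval containing $c$ (and the condition is vacuous when $k=m+1$). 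Condition (ii) is exactly our choice $\gamma_i=\phi(u(k)v(k))$. For condition (iii), a short parity calculation using $|w_{S,1}|\equiv |u|+1+|S|\pmod 2$, $|w_{S,1}(k)|=|v(k)|$, and $|w_{S,1}(i)|\equiv |u(i)|+1\pmod 2$ for $i\in S$ reduces the required congruence to the defining congruence for $r_i$, which holds by construction. The main obstacle is to choose $(r_i,\gamma_i)$ so that conditions (ii) and (iii) hold simultaneously at every Step~\ref{parity} edge of the cube; the above choice works uniformly because (ii) and (iii) constrain independent parameters ($\gamma_i$ and $r_i$), and the computation of each $r_i$ turns out to depend only on $i$ and not on $S$.
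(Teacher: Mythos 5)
Your proof is correct and takes essentially the same approach as the paper: you build the explicit copy of $Q_m$ on the vertices $u+\epsilon e_c + \sum_{i\in S} e_{c_i}$ (the paper's $u_{I'},v_{I'}$ in different notation) and verify that each non-$uv$ edge falls under Step~\ref{dominate} or satisfies conditions (i)--(iii) of Step~\ref{parity}. Your choice $r_i\equiv|u|+|v(k)|+|u(i)|+1\pmod 2$ agrees with the paper's $r_i\equiv|v|+|v(k)|+|v(i)|\pmod 2$, since $|u|\equiv|v|+1\pmod2$ and $u(i)=v(i)$ for $i\neq k$, so the parity bookkeeping is the same.
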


\begin{claim}
\label{Qmfree}
$G$ does not contain $Q_m$ as a subgraph. 
\end{claim}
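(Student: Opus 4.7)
Assume $Q$ is a copy of $Q_m$ in $G$ with variable coordinate set $J\subseteq[d]$. Observation~\ref{isolates} immediately gives $V(Q)\cap(A_m\cup X)=\emptyset$, and I would first strengthen this to $V(Q)\cap A_0=\emptyset$. If $v\in A_0\cap V(Q)$, then each neighbour $v+e_a$ in $Q$ lies in $A_1$ (by Observation~\ref{A0stable} and the fact that a single flip changes $|I|$ by at most $1$). This forces each variable $a\in J$ to lie in some interval $i_a\in[m]$, with $(v+e_a)(i_a,r_a,\gamma_a)$ the unique $C$-neighbour of $v(i_a,r_a,\gamma_a)$ by Theorem~\ref{hamThm}; the definition of $X$ ensures the $i_a$ are pairwise distinct, so $v+\sum_{a\in J}e_a\in A_m$, contradicting the fact that $A_m$ is isolated in $G$. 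Hence $1\leq|I(v)|\leq m-1$ for every $v\in V(Q)$.

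Pick $v_0\in V(Q)$ minimising $|I(v_0)|=:j_0$, and let $K=\{i\in[m+1]:J\text{ meets interval }i\}$. Using Condition~(i) of Step~\ref{parity} together with the minimality of $v_0$, one deduces $I(v_0)\cap K=\emptyset$: if $k\in I(v_0)\cap K$ via some $(k,r^*,\gamma^*)$ and $a\in J$ has $i_a=k$, then either $(r_a,\gamma_a)=(r^*,\gamma^*)$, which strictly decreases $|I|$ upon walking $a$ and violates minimality, or $(r_a,\gamma_a)\neq(r^*,\gamma^*)$, which leaves $v_0(k,r^*,\gamma^*)=(v_0+e_a)(k,r^*,\gamma^*)\in C$ and violates Condition~(i). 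Since intervals in $[m]\setminus K$ carry no variable, for each $i^*\in I(v_0)$ the witnessing triple $(i^*,r^*,\gamma^*)$ satisfies $v(i^*,r^*,\gamma^*)\in C$ for every $v\in V(Q)$.

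In Case A (some interval $k\in[m+1]$ contains two variables $a,a'\in J$), a counting argument yields $|K|\leq m-1$, so $[m]\setminus K\neq\emptyset$ and an $i^*\in I(v_0)$ as above exists. When $k=m+1$, walking variables in interval $m+1$ does not change $I$, so the $C_4$ on $\{v_0,v_0+e_a,v_0+e_{a'},v_0+e_a+e_{a'}\}$ has constant $|I|$ and all four edges are Step~$2$; Condition~(ii) applied at $(i^*,r^*,\gamma^*)$ colours each edge $\gamma^*$ under $\phi$, giving a monochromatic $C_4$ in $Q_s$ and contradicting Theorem~\ref{3colours}. When $k\leq m$, a careful sub-case analysis on whether $a,a'$ share a sub-interval, and on the Hamming neighbourhoods of $v_0(k,r_a,\gamma_a)$ and $v_0(k,r_{a'},\gamma_{a'})$, produces a $C_4$ whose interval-$k$ projection avoids $C$ at every sub-interval, so the same monochromatic argument in $Q_{6(2^t-1)}$ applies.

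In Case B (every interval contains at most one variable, so $|K|=m$), one obtains $K=[m+1]\setminus\{i^*\}$ with $i^*\in[m]$ and $I(v_0)=\{i^*\}$. Classify each $i\in K\cap[m]$ as type A, B or C according to whether $v_0(i,r_{a_i},\gamma_{a_i})\in C$, its flipped value is in $C$, or neither; the inequality $|I(v_S)|\leq m-1$ over all $S\subseteq J$ forces at least one type-C index $k^*\in K\cap[m]$. The $C_4$ on $\{v_0,v_0+e_{a_{k^*}},v_0+e_{a_{m+1}},v_0+e_{a_{k^*}}+e_{a_{m+1}}\}$ has constant $|I|=1$ (type C means walking $a_{k^*}$ keeps $k^*\notin I$, and walking $a_{m+1}$ never affects $I$), so all four edges are Step~$2$; Condition~(iii) applied to the two parallel edges flipping $a_{k^*}$ forces $|v_0|+|v_0(k^*)|+|v_0(i^*)|$ to be congruent to both $r^*$ and $r^*+1$ modulo $2$, a contradiction. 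The principal obstacle is Case A with $k\leq m$: the parity Condition~(iii) is useless along a same-interval $C_4$ since $|v(k)|$ changes by $2$, so one must rely on Condition~(ii) and Theorem~\ref{3colours}, and producing a $C_4$ whose $k$-projection lies entirely outside $C$ requires a delicate argument exploiting the minimum-distance structure of the Hamming code.
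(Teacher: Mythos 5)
Your approach diverges noticeably from the paper's. You first pin down $V(Q)\cap A_0=\emptyset$, isolate a vertex $v_0$ minimising $|I(v_0)|$, and establish $I(v_0)\cap K=\emptyset$; you then split into Case~A (some interval carries two variable coordinates) and Case~B (every interval carries at most one). The paper instead proves a chain of structural subclaims (Subclaims~\ref{samespot}, \ref{vvvc}, \ref{atmost2}, \ref{m+1<=1}, \ref{vtov}, \ref{J(m+1)=1}, \ref{J(i)=2}) before its case analysis. Your Case~B argument (locate a type-C index $k^*$ and derive a parity contradiction from Condition~(iii) along the two parallel edges of a $C_4$ through coordinates in interval $k^*$ and interval $m+1$) is sound and recovers the substance of the paper's Subclaim~\ref{vtov} in that setting, and your Case~A with $k=m+1$ is also correct.

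However, Case~A with $k\leq m$ has a genuine gap that the sketch you give cannot repair. You want a $C_4$ in $Q$ varying two coordinates $a,a'\in J(k)$ whose interval-$k$ projections all avoid $C$. But when $a,a'$ lie in the same subinterval $(k,r_a,\gamma_a)$ and the unique $C$-neighbour of $v_0(k,r_a,\gamma_a)$ in $Q_{2^t-1}$ is $v_0(k,r_a,\gamma_a)+e_a$, the vertex $v_0+e_a$ lands in $A_{j_0+1}$: exactly one vertex of the $C_4$ hits $C$, and no choice of pair of coordinates from $J(k)$ escapes this, since the perfect-covering property of the Hamming code forces exactly one vertex of any such $C_4$ to hit $C$ (this is precisely the paper's Subclaim~\ref{vvvc}, which runs in the direction opposite to what you need). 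Two of the four edges of this $C_4$ are then Step~\ref{dominate} edges, which carry no colour constraint, so Condition~(ii) only forces two edges of a path to share the colour $\gamma^*$, and no monochromatic $4$-cycle in $Q_{6(2^t-1)}$ appears. The paper closes this case not by a colouring argument at all, but by combining Subclaims~\ref{vvvc} and \ref{vtov} with the cardinality constraints of Subclaims~\ref{atmost2}, \ref{m+1<=1}, \ref{J(m+1)=1} and \ref{J(i)=2} to walk a vertex of $Q$ up into $A_m$, contradicting Observation~\ref{isolates}. Some version of that machinery seems unavoidable here, and your ``delicate argument exploiting the minimum-distance structure of the Hamming code'' does not and cannot produce the $C_4$ you describe.
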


\begin{proof}[Proof of Theorem~\ref{main}]
If $G$ is not $(Q_d,Q_m)$-saturated, then by Claim~\ref{Qmfree} we can extend $G$ to a $(Q_d,Q_m)$-saturated graph $G'$ by adding a maximal set of edges which do not create a copy of $Q_m$. By Claim~\ref{saturated} none of these additional edges are between vertices in $A_0$ and so, by Observation~\ref{A0stable}, $A_0$ is an independent set in $G'$. Thus, every edge of $G'$ has at least one endpoint in $\left(\bigcup_{k=1}^mA_k\right)\cup X$. The total number of edges incident to vertices of this set is at most
\[d\left|\left(\bigcup_{k=1}^mA_k\right)\cup X\right| = d|A_1| + d\left|\left(\bigcup_{k=2}^mA_k\right)\cup X\right|\]
Note that $\left|\left(\bigcup_{k=2}^mA_k\right)\cup X\right| = O\left(|C|^22^{d-2(2^t-1)}\right)$, which is $O\left(2^d/d^2\right)$ by (\ref{Cbound}) and (\ref{dval}). Therefore the second term of the above expression is $o\left(2^d\right)$ and it suffices to bound $d|A_1|$. We have
\[d|A_1| = d\left(6m|C|2^{d-(2^t-1)}\right) = d\left(6m2^{2^t-t-1}2^{d-(2^t-1)}\right)=6md2^{d-t}\]
\[= 6m\left(6m(2^t-1) + s\right)2^{d-t} < 72m^22^d\]
by (\ref{dval}). The result follows.
\end{proof}

\begin{rem}
\label{speciald}
Note that if $d$ is of the form $6m(2^t-1)$ for some $t$, then we obtain a better bound: $\sat(Q_d,Q_m)\leq (1+o(1))36m^22^d$. 
\end{rem}

Thus, it suffices to prove Claims~\ref{saturated} and~\ref{Qmfree}.

\begin{proof}[Proof of Claim~\ref{saturated}]
Let $uv$ be an edge of $Q_d$ where $u,v\in A_0$ and let $k$ be the unique element of $[m+1]$ for which $u(k)\neq v(k)$. Define $\gamma := \phi(u(k)v(k))$ and let $I\subseteq [m]\setminus\{k\}$ be a set of size $m-1$. For $i\in I$, pick $r_i\in\{0,1\}$ so that
\[|v| + |v(k)| + |v(i)|\equiv r_i\mod 2.\]

For each $i\in I$, let $c_i$ be the unique neighbour of $v(i,r_i,\gamma)$ in $Q_{2^t-1}$ contained in $C$. Given $I'\subseteq I$, we let $v_{I'}$ and $u_{I'}$ be the vertices of $Q_d$ such that $u_{I'}(i,r_i,\gamma)= v_{I'}(i,r_i,\gamma)=c_i$ for all $i\in I'$ and, on all other coordinates, $u_{I'}$ and $v_{I'}$ agree with $u$ and $v$, respectively. In particular, $v_\emptyset=v$ and $u_\emptyset = u$. If $I'\neq I$, then for any $i'\in I\setminus I'$ we see that $v_{I'}$ is adjacent to $v_{I'\cup\{i'\}}$ and $u_{I'}$ is adjacent to $u_{I'\cup\{i'\}}$ in $G$ via edges added in Step~\ref{dominate}. Also, for $I'\subseteq I$, we have $v_{I'},u_{I'}\in A_{|I'|}$ and, if $I'\neq \emptyset$, then for each $i\in I'$,
\[\left|v_{I'}\right| + \left|v_{I'}(k)\right| + \left|v_{I'}(i)\right|\equiv r_i + |I'|-1\mod 2.\]
Thus,  for $I'\neq\emptyset$, $v_{I'}$ is adjacent to $u_{I'}$ via an edge of $G$ added in Step~\ref{parity}. This implies that $\left\{v_{I'}: I'\subseteq I\right\}\cup \left\{u_{I'}: I'\subseteq I\right\}$ induces a copy of $Q_m$ in $G+uv$, which completes the proof. 
\end{proof}

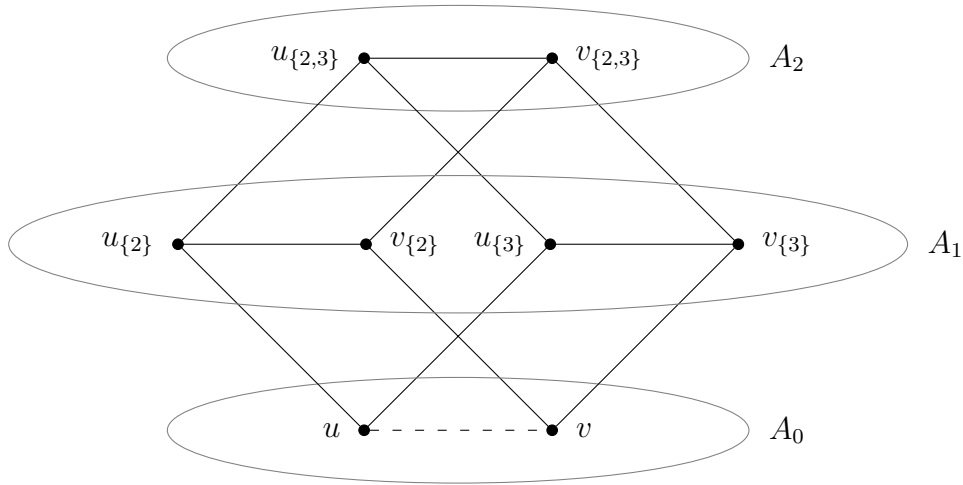
\begin{figure}[htbp]
\begin{center}
\begin{tikzpicture}[label distance=2.0mm]
    \tikzstyle{every node}=[draw,circle,fill=black,minimum size=4pt,
                            inner sep=0pt]
	

	\draw (0,0) node (u23) [label=left:$u_{\{2,3\}}$] {}
		    ++ (0:1.25cm) node[gray,ellipse,fill=none,minimum height=40pt, minimum width = 220pt, label=right:$A_2$] (mid){}
		    ++ (0:1.25cm) node (v23) [label=right:$v_{\{2,3\}}$] {}
		-- ++ (315:3.5cm) node (v3) [label=right:$v_{\{3\}}$] {}
		-- ++ (180:2.5cm) node (u3) [label=left:$u_{\{3\}}$] {}
		-- ++ (225:3.5cm) node (u) [label=left:$u$] {}
		    ++ (0:1.25cm) node[gray,ellipse,fill=none,minimum height=40pt, minimum width = 220pt, label=right:$A_0$] (mid){}
		 ++ (0:1.25cm) node (v) [label=right:$v$] {}
		-- ++ (135:3.5cm) node (v2) [label=right:$v_{\{2\}}$] {}
		-- ++ (180:2.5cm) node (u2) [label=left:$u_{\{2\}}$] {};
        
	\draw[ dash pattern=on 0.12cm off 0.18cm] (u)--(v);
	\draw (u23)--(v23);
	\draw (u)--(u2);
   	\draw (u2)--(u23);
	\draw (v)--(v3);
	\draw (u3)--(u23);
	\draw (v2)--(v23);



\path (0,0) coordinate (C1) -- ++(0:1.25cm) coordinate (C2) --  ++(315:3.5cm) coordinate(C4) -- ++ (135:3.5cm) coordinate(C5) -- ++(225:3.5) coordinate(C6) -- coordinate (C3) (C4);


\node[gray,ellipse, fill=none,minimum height=52pt, minimum width = 340pt, label=right:$A_1$] at (C3){};

\end{tikzpicture}
\end{center}
\caption{An illustration of the proof of Claim~\ref{saturated} in the case that $m=3$ and  $u(1)\neq v(1)$. }
\label{satFig}
\end{figure}

\begin{proof}[Proof of Claim~\ref{Qmfree}]
Suppose, to the contrary, that $G$ contains a copy $Q$ of $Q_m$. Let $J\subseteq [d]$ be the set of coordinates which are variable under $Q$. For each $i\in [m+1]$, let $J(i)$ be the set of coordinates of $J$ which are in the interval of $[d]$ corresponding to $i$. Moreover, given $(i,r,\gamma)\in [m]\times\{0,1\}\times\{0,1,2\}$, let $J(i,r,\gamma)$ be the set of coordinates of $J$ which are in the interval of $[d]$ corresponding to $(i,r,\gamma)$. 

\begin{subclaim}
\label{samespot}
For each $i\in [m]$ there is a pair $\left(r_i,\gamma_i\right)\in\{0,1\}\times\{0,1,2\}$ such that $J(i)=J\left(i,r_i,\gamma_i\right)$.
\end{subclaim}

\begin{proof}
Suppose to the contrary that there exists $(r,\gamma)\neq (r',\gamma')$ such that both $J(i,r,\gamma)$ and $J(i,r',\gamma')$ are non-empty. 

First, suppose that there exists a vertex $v\in V(Q)$ with $v(i,r,\gamma)\in C$. Let $u$ be a neighbour of $v$ in $Q$ obtained by changing a coordinate in $J(i,r',\gamma')$. By Observation~\ref{Cfixes}, the edge $uv$ is not contained in $G$, which is a contradiction. Thus, $v(i,r,\gamma),v(i,r',\gamma')\notin C$ for all $v\in V(Q)$. 

Now, let $Q'$ be a copy of $Q_2$ in $Q$ obtained by starting with an arbitrary vertex of $Q$ and varying one coordinate of $J(i,r,\gamma)$ and one coordinate of $J(i,r',\gamma')$. By the result of the previous paragraph, we see that $V(Q')\subseteq A_j$ for some $j$ and so every edge of $Q'$ was added in Step~\ref{parity}. By taking the edges of $Q'$ and restricting them to the interval of $[d]$ corresponding to $i$, we see that all of the resulting edges must receive the same colour under $\phi$. This contradicts the fact that there is no copy of $Q_2$ which is monochromatic under $\phi$ and completes the proof of the subclaim. 
\end{proof}

\begin{subclaim}
\label{vvvc}
Suppose that $|J(i)|\geq 2$ for some $i\in [m]$ and let $Q'$ be a copy of $Q_2$ in $Q$ obtained by starting with an arbitrary vertex and varying two coordinates of $J(i)$. Then there is a unique vertex $v\in V(Q')$ with $v(i,r_i,\gamma_i)\in C$. 
\end{subclaim}

\begin{proof}
By Subclaim~\ref{samespot} we have $J(i)=J(i,r_i,\gamma_i)$. By Observation~\ref{ctoc}, if $uv$ is an edge of $Q'$, then we cannot have $u(i,r_i,\gamma_i),v(i,r_i,\gamma_i)\in C$. Thus, if there are two vertices $u,v$ of $Q'$ for which $u(i,r_i,\gamma_i),v(i,r_i,\gamma_i)\in C$, then they must be non-adjacent. So, in this case, the vertices of $Q'$ alternate between $A_j$ and $A_{j+1}$ for some $j$ and every edge of $Q'$ was added in Step~\ref{dominate}. However, this implies that for $w\in V(Q')$ such that $w(i,r_i,\gamma_i)\notin C$, the vertex $w(i,r_i,\gamma_i)$ of $Q_{2^t-1}$ must have two distinct neighbours in $C$, which is a contradiction.

Now we assume that every vertex $v$ of $Q'$ satisfies $v(i,r_i,\gamma_i)\notin C$. However, in this case, we see that every edge of $Q'$ was added in Step~\ref{parity}. As before, we obtain a copy of $Q_2$ in $Q_{6(2^t-1)}$ which is monochromatic under $\phi$, a contradiction. 
\end{proof}

\begin{subclaim}
\label{atmost2}
$|J(i)|\leq 2$ for every $i\in[m]$. 
\end{subclaim}

\begin{proof}
Suppose not. Let $Q'$ be a copy of $Q_3$ in $Q$ obtained by starting with an arbitrary vertex and varying three coordinates of $J(i)$. If we vary any pair of these coordinates, leaving the third one fixed, we obtain a copy of $Q_2$ in $Q'$ which must obey Subclaim~\ref{vvvc}. This implies that there are precisely two vertices $x,y\in V(Q')$ such that $x(i,r_i,\gamma_i),y(i,r_i,\gamma_i)\in C$ and they are at distance $3$ in $Q'$. However, now we get that the vertices of $V(Q')\setminus \{x,y\}$ induce a copy $F$ of $C_6$ in $Q$, where every edge of $F$ was added in Step~\ref{parity}. Taking the edges of $F$ and restricting them to the interval of $[d]$ corresponding to $i$, we see that all such edges must receive the same colour under $\phi$. This contradicts the fact that there is no copy of $C_6$ which is monochromatic under $\phi$ and completes the proof of the subclaim. 
\end{proof}

\begin{subclaim}
\label{m+1<=1}
$|J(m+1)|\leq 1$.
\end{subclaim}

\begin{proof}
If not, then let $Q'$ be a copy of $Q_2$ in $Q$ obtained by starting at an arbitrary vertex and varying two coordinates of $J(m+1)$. Then, restricting the edges of $Q'$ to the last $s$ coordinates, we obtain a copy of $Q_2$ in $Q_s$ which is monochromatic under $\phi$, a contradiction.
\end{proof}

\begin{subclaim}
\label{vtov}
Suppose that $uv$ and $vw$ are distinct edges of $Q$ where $u,v,w\in A_j$ for some $j$. Then there is some $k\in[m]$ such that $u(k)\neq v(k)$ and $v(k)\neq w(k)$. 
\end{subclaim}

\begin{proof}
We show that there cannot exist distinct integers $k,k'\in [m+1]$ such that $u(k)\neq v(k)$ and $v(k')\neq w(k')$. This is sufficient to prove the subclaim since, by Subclaim~\ref{m+1<=1}, we cannot have $u(m+1)\neq v(m+1)$ and $v(m+1)\neq w(m+1)$.

Suppose that such integers $k,k'$ exist, and let $x$ be the unique vertex of $Q_d$ distinct from $v$ which is joined to both $u$ and $w$ in $Q_d$. Note that both of the edges $xu$ and $wx$ are present in $Q$. Also, $x\in A_j$ and so all of the edges $uv,vw,wx,xu$ were added to $G$ in Step~\ref{parity}. This implies that $j\geq1$ and that there is a triple $(i,r,\gamma)$ with $i\notin\{k,k'\}$ such that
\[u(i,r,\gamma)=v(i,r,\gamma)= w(i,r,\gamma)=x(i,r,\gamma)\in C.\]
However, one can easily check that the parity of $|v| + |v(k)| + |v(i)|$ is different from the parity of $|x| + |x(k)| + |x(i)|$ modulo $2$ and so only one can be equivalent to $r$. By definition of Step~\ref{parity}, only one of the edges $uv$ and $xw$ can exist in $G$. This contradiction completes the proof. 
\end{proof}

\begin{subclaim}
\label{J(m+1)=1}
If $|J(m+1)|=1$, then $|J(i)|\leq 1$ for all $i\in[m]$.
\end{subclaim}

\begin{proof}
If not, let $Q'$ be a copy of $Q_3$ in $Q$ obtained by starting at an arbitrary vertex and varying one coordinate of $J(m+1)$ and two of $J(i)$. Then, by Subclaim~\ref{vvvc}, there is an edge $uv\in E(Q')$ such that $u(i)\neq v(i)$ and $u,v\in A_j$ for some $j$ (in fact, there are many such edges). Now, let $w$ be the neighbour of $v$ in $Q'$ obtained by changing the coordinate of $J(m+1)$. By Observation~\ref{m+1change}, we have that $w\in A_j$ as well. However, this contradicts Subclaim~\ref{vtov}. 
\end{proof}

\begin{subclaim}
\label{J(i)=2}
There is at most one $i\in[m]$ for which $|J(i)|=2$. 
\end{subclaim}

\begin{proof}
If not, let $i,i'\in[m]$ such that $|J(i)|=|J(i')|=2$ and let $Q'$ be a copy of $Q_4$ obtained by starting at an arbitrary vertex and varying two coordinates of $J(i)$ and two coordinates of $J(i')$. By Subclaim~\ref{vvvc}, we see that there must exist edges $uv,vw\in E(Q')$ such that $u(i)\neq v(i)$, $v(i')\neq w(i')$ and $u,v,w\in A_j$ for some $j$. This contradicts Subclaim~\ref{vtov} and completes the proof. 
\end{proof}

Now, let us complete the proof of the claim. Throughout, for each $i\in[m]$, we let $(r_i,\gamma_i)$ be a pair such that $J(i)=J(i,r_i,\gamma_i)$, which exists by Subclaim~\ref{samespot}. In what follows, we let $j^*$ be the minimum integer $j$ such that $V(Q)\cap A_j\neq\emptyset$ and let $v^*\in V(Q)\cap A_{j^*}$. Note that for every $i\in [m]$ such that $J(i)\neq \emptyset$ we must have $v^*(i,r_i,\gamma_i)\notin C$. If not, then starting with $v^*$ and changing a coordinate of $J(i)$ yields a vertex of $V(Q)\cap A_{j^*-1}$ by Observations~\ref{Cfixes} and~\ref{ctoc}, contradicting our choice of $j^*$. By Subclaims~\ref{J(m+1)=1} and~\ref{J(i)=2}, there is at most one $i\in[m]$ such that $J(i)=\emptyset$. This implies that 
\begin{equation}\label{jstar}j^*\in\{0,1\},\end{equation} where $j^*$ must equal zero if no such $i$ exists.  We divide the proof into cases.

\begin{case}
\label{Ji1}
$|J(i)|=1$ for every $i\in[m]$.
\end{case}

In this case, we must have $j^*=0$ and so $v^*\in A_0$. However, if we start with $v$ and change the coordinate of $J(i)$ for any $i$, then we obtain a vertex $u$ with $u(i,r_i,\gamma_i)\in C$ by Observation~\ref{A0stable}. Thus, changing all $m$ of these coordinates yields a vertex $w$ such that $w(i,r_i,\gamma_i)\in C$ for all $i\in[m]$; that is, $w\in A_m$. This is a contradiction since, by Observation~\ref{isolates}, $w$ is an isolated vertex and therefore cannot belong to $Q$. 

\begin{case}
\label{Jm+1case}
$|J(m+1)|=1$.
\end{case}

Let $u$ be the neighbour of $v^*$ in $Q$ obtained by changing the coordinate in $J(m+1)$. We must have $u,v^*\in A_1$ by (\ref{jstar}) and Observation~\ref{m+1change}. This implies that there is some $i\in[m]$ such that $J(i)=\emptyset$ and $v^*(i,r_i,\gamma_i)\in C$. Also, by Subclaim~\ref{J(m+1)=1} and the Pigeonhole Principle, we must have $|J(i')|=1$ for every $i'\in[m]\setminus \{i\}$. 

Now, for each $i'\in[m]\setminus\{i\}$, let $w_{i'}$ be the neighbour of $v^*$ in $Q$ obtained by changing the coordinate in $J(i')$. By Subclaim~\ref{vtov} and the fact that $u,v^*\in A_1$, we cannot have $w_{i'}\in A_1$. So, by our choice of $j^*$, we must have $w_{i'}(i',r_{i'},\gamma_{i'})\in C$. As in the proof of Case~\ref{Ji1}, if we start with $v^*$ and change the coordinate of $J(i')$ for every $i'\in[m]\setminus \{i\}$, we obtain a vertex of $A_m$ contained in $Q$, contradicting Observation~\ref{isolates}. This completes the proof in this case.

\begin{case}
$|J(i)|=2$ for some $i\in[m]$.
\end{case}

By Subclaim~\ref{vvvc}, there is a neighbour $u$ of $v^*$ in $Q$ obtained by changing a coordinate in $J(i)$ such that $u\in A_{j^*}$. This immediately implies that $j^*=1$ by (\ref{jstar}) and Observation~\ref{A0stable}. 

Now, for each $i'\in[m]$ for which $|J(i')|=1$, let $w_{i'}$ be the neighbour of $v^*$ in $Q$ obtained by changing the coordinate in $J(i')$. By Subclaim~\ref{vtov} and the fact that $u,v^*\in A_1$, we must have $w_{i'}(i',r_{i'},\gamma_{i'})\in C$. Thus, if we let $x$ be the vertex obtained from $v^*$ by changing the coordinate of $J(i')$ for every such $i'$, then $x\in A_{m-1}$. 

Let $Q'$ be the copy of $Q_2$ in $Q$ obtained by starting at $x$ and varying the two coordinates of $J(i)$. Then, by Subclaim~\ref{vvvc}, there must be some vertex $y$ of $V(Q')$ such that $y(i,r_i,\gamma_i)\in C$. However, this implies that $y\in A_m$, contradicting Observation~\ref{isolates}. This completes the proof of Claim~\ref{Qmfree} and of Theorem~\ref{main}. 
\end{proof}

\section{Weak Saturation}
\label{wsatSection}

\subsection{Hypercubes and Grids}

In this section, we discuss weak saturation in cubes and more generally in grids.
We will prove Theorem \ref{wsatThm}, which immediately implies Theorem \ref{wsatThmQ}. 

Weak saturation is part of a more general theory, known as \emph{bootstrap percolation}. In the $(F,H)$-\emph{graph bootstrap process}, we start with an initial set $S_0$ of `infected' vertices in a graph $F$ and, at the $i$th step of the process, a vertex $v\in V(F)\setminus \left(\bigcup_{j=0}^{i-1}S_j\right)$ becomes infected and is added to $S_i$ if there is a copy of $H$ in the subgraph of $F$ induced by $\left(\bigcup_{j=0}^{i-1}S_j\right)\cup\{v\}$ containing $v$. A natural extremal problem is to determine the size of the smallest initial set $S_0$ such that every vertex of $F$ is eventually infected; such a set is said to be \emph{$(F,H)$-percolating}. It is easily seen that weak saturation corresponds to a bootstrap process on the edges of a graph, rather than its vertices 
(i.e.~a bootstrap process on the line graph). For an introduction to the literature on bootstrap percolation, see for instance~\cite{LinePerc,BootHypercube,GraphBoot} and the references therein.

Here we are interested in the `edge version' of the problem for cubes and grids.  The `vertex version' of the problem was solved by Balogh, Bollob\'{a}s, Morris and Riordan~\cite{LinAlg},
who determined the minimum size of a subset of $V\left(P_k^d\right)$ which is percolating with respect to (the vertex sets of) axis aligned copies of $P_r^m$, and also the 
minimum size of a $(K_k^d,K_r^m)$-percolating set, for all $k\geq r\geq 2$ and $d\geq m\geq1$  (here $K_k^d$ is the graph with vertex set $\{0, \ldots, k-1\}^d$ and two vertices are adjacent if they differ in exactly one coordinate).  Somewhat surprisingly, the two quantities are the same. 

We will use the following simple linear algebraic lemma from~\cite{LinAlg}.  Given a graph $F$ and a set $\mathcal{H}$ of subgraphs of $F$, let $\wsat\left(F,\mathcal{H}\right)$ be the minimum number of edges in a graph $G$ such that the edges of $E(F)\setminus E(G)$ can be added to $G$, one edge at a time, in such a way that each added edge increases the number of graphs of $\mathcal{H}$ contained in $G$; such a graph is said to be \emph{weakly $(F,\mathcal{H})$-saturated}. In this language, $\wsat^*\left(P_k^d, P_r^m\right) = \wsat\left(P_k^d,\mathcal{H}\right)$ where $\mathcal{H}$ contains all axis aligned copies of $P_r^m$ in $P_k^d$. 

\begin{lem}[Balogh, Bollob\'{a}s, Morris and Riordan~\cite{LinAlg}]
\label{LinAlgLemma}
Let $F$ be a graph, let $\mathcal{H}$ be a collection of subgraphs of $F$, and let $W$ be a vector space. Suppose that there exists a set $\left\{f_e: e\in E(F)\right\}\subseteq W$ such that for every $H\in \mathcal{H}$ there are non-zero scalars $\left\{c_e:e\in E(H)\right\}$ such that $\sum_{e\in E(H)}c_ef_e=0$. Then
\[\wsat(F,\mathcal{H})\geq \vdim\left(\vspan\{f_e:e\in E(F)\}\right).\]
\end{lem}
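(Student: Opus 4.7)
The plan is a straightforward linear-algebraic argument: take any weakly $(F,\mathcal{H})$-saturated graph $G$ and show that $\{f_e : e \in E(G)\}$ already spans the whole subspace $\vspan\{f_e : e \in E(F)\}$. Once this is proved, the inequality $|E(G)| \geq \vdim(\vspan\{f_e : e \in E(F)\})$ is immediate, and taking the minimum over $G$ gives the lemma.

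To establish the spanning claim, let $e_1, e_2, \ldots, e_N$ be an enumeration of $E(F) \setminus E(G)$ in the order witnessing the weak saturation property, so that each $e_i$ completes a copy $H_i \in \mathcal{H}$ whose other edges all lie in $E(G) \cup \{e_1, \ldots, e_{i-1}\}$. I will prove by induction on $i$ that $f_{e_i} \in \vspan\{f_e : e \in E(G)\}$. By hypothesis there exist nonzero scalars $\{c_e : e \in E(H_i)\}$ with $\sum_{e \in E(H_i)} c_e f_e = 0$, and since $c_{e_i} \neq 0$ we may solve
\[
f_{e_i} = -\frac{1}{c_{e_i}} \sum_{e \in E(H_i) \setminus \{e_i\}} c_e f_e.
\]
Each $f_e$ on the right-hand side corresponds either to an edge of $G$ or to some $e_j$ with $j < i$, and the inductive hypothesis places all of these vectors in $\vspan\{f_e : e \in E(G)\}$. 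Hence $f_{e_i}$ lies there too, completing the induction.

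Therefore $\vspan\{f_e : e \in E(F)\} = \vspan\{f_e : e \in E(G)\}$, so the latter span has dimension at least $\vdim(\vspan\{f_e : e \in E(F)\})$, which is a lower bound for $|E(G)|$. Since $G$ was an arbitrary weakly $(F,\mathcal{H})$-saturated graph, the desired inequality for $\wsat(F,\mathcal{H})$ follows. There is no real obstacle here: the only thing one needs to notice is that the hypothesis ``$c_e$ nonzero for every $e \in E(H)$'' is precisely what makes the inductive step work, since the new edge $e_i$ can be any edge of $H_i$ and we must be able to solve for its $f$-vector in terms of the others.
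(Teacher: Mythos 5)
Your proof is correct and follows essentially the same approach as the paper: you enumerate the added edges in the order witnessing weak saturation and show, using the nonzero coefficients, that each new $f_{e_i}$ lies in the span of the $f_e$ for $e$ already present, concluding that $\vspan\{f_e : e \in E(G)\}$ already equals $\vspan\{f_e : e \in E(F)\}$. The paper phrases this as a chain of equal spans $\vspan\{f_e : e \in E(G_0)\} = \cdots = \vspan\{f_e : e \in E(G_k)\}$ while you phrase it as an induction, but these are the same argument.
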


\begin{proof}
Let $G$ be a weakly $(F,\mathcal{H})$-saturated graph. Define $G_0:=G$ and label the edges of $E(F)\setminus E(G)$ by $e_1,\dots,e_k$ such that for $1\leq i\leq k$ there is a subgraph $H_i\in\mathcal{H}$ of $G_i:=G_{i-1}+e_i$ such that $H_i$ contains $e_i$. Thus, by hypothesis, $f_{e_i}$ can be written as a linear combination of the vectors in $\left\{f_e:e\in E(H_i)\setminus\left\{e_i\right\}\right\}$. This implies that
\[\vspan\left\{f_e: e\in E(G_0)\right\} = \vspan\left\{f_e: e\in E(G_1)\right\} =\dots =\vspan\left\{f_e: e\in E(G_k)\right\}.\]
Since $G=G_0$ and $F=G_k$, it must be the case that $|E(G)|\geq \vdim\left(\vspan\left\{f_e: e\in E(F)\right\}\right)$. Since $G$ was an arbitrary weakly $(F,\mathcal{H})$-saturated graph, this completes the proof. 
\end{proof}

To handle the vertex version of the question, Balogh, Bollob\'{a}s, Morris and Riordan gave a clever construction of a suitable vector space and then proved that it has the same dimension as a suitable percolating set. The result then follows by Lemma~\ref{LinAlgLemma}. Our proof of Theorem \ref{wsatThm} uses the same approach, but requires a different construction. For other examples in which the notions of linear dependence and independence are applied to solve problems in weak saturation, see
Alon~\cite{Alonwsat}, Kalai~\cite{Hypercon} and Pikhurko~\cite{PikPhD,PikhurkoExt}.

 Given a vertex $v$ of $P_k^d$, say that a coordinate of $v$ is \emph{large} if it has size at least $r-1$ and \emph{small} otherwise. We let $L(v)$ denote the number of large coordinates of $v$ and let $|v|$ denote the sum of the coordinates of $v$. Given $i\in[d]$, a \emph{line} $\mathcal{L}$ in direction $i$ is a path of length $k-1$ in $P_k^d$ in which any two vertices of $\mathcal{L}$ differ on the $i$th coordinate.

\begin{proof}[Proof of Theorem~\ref{wsatThm}]
Let $G$ be a spanning subgraph of $P_k^d$ in which, for each vertex $v$,
\begin{itemize}
\item we add every edge $uv$ of $P_k^d$ such that $|u|=|v|-1$ and $u$ and $v$ differ on a small coordinate of $v$, and
\item we add $\min\{L(v),m-1\}$ edges $uv$ of $P_k^d$ such that $|u|=|v|-1$ and $u$ and $v$ differ on a large coordinate of $v$.
\end{itemize}
Some elementary counting gives us
\[E(G) = \sum_{j=0}^d\sum_{i=0}^{d-j}(m-1+i)\binom{d}{j}\binom{d-j}{i}(k-r+1)^j(r-2)^i\]
\[-\sum_{j=0}^{m-2}\sum_{i=0}^{d-j}(m-1-j)\binom{d}{j}\binom{d-j}{i}(k-r+1)^j(r-2)^i.\]
So, to prove the upper bound, we need only show that $G$ is weakly $\left(P_k^d,P_r^m\right)$-saturated. In order of increasing $|v|$, we add all missing edges of $E(P_k^d)\setminus E(G)$ from $v$ to vertices $u$ with $|u|=|v|-1$ one edge at a time. By construction, for every added edge $uv$ with $|u|=|v|-1$, the coordinate on which $u$ and $v$ differ is large in $v$. For any vertex $x$, if $L(x)\leq m-1$, then all edges $xy$ with $|x|=|y|-1$ are already present in $G$. Also, if $v$ is a vertex with $L(v)\geq m$, then $G$ contains $m-1$ edges $wv$ with $|w|=|v|-1$ such that the coordinate on which $w$ and $v$ differ is large in $v$. Putting this together, an easy inductive argument shows that, if we add edges in this order, then each edge added from $v$ to a vertex $u$ with $|u|=|v|-1$ creates a new copy of $P_r^m$ in which $v$ is the `top' vertex.

For the lower bound, we apply Lemma~\ref{LinAlgLemma} where $\mathcal{H}$ consists of all axis aligned copies of $P_r^m$ in $P_k^d$. So, it suffices to show that there exists a vector space $W$ and a set $\left\{f_e: e\in E\left(P_k^d\right)\right\}\subseteq W$ which satisfy the hypotheses of Lemma~\ref{LinAlgLemma} such that 
\begin{equation}\label{dimBound}\vdim\left(\vspan\left\{f_e: e\in E\left(P_k^d\right)\right\}\right)\geq |E(G)|.\end{equation}
The space $W$ that we choose is the direct sum of $k^d$ copies of $\mathbb{R}^{m-1}$, one for each vertex of $P_k^d$, and $dk^{d-1}$ copies of $\mathbb{R}^{r-2}$, one for each line  in $P_k^d$. That is,
\[W:=\left(\bigoplus_{x}\mathbb{R}^{m-1}\right)\oplus\left(\bigoplus_{\mathcal{L}}\mathbb{R}^{r-2}\right)\]
where $x$ ranges over all vertices of $P_k^d$ and $\mathcal{L}$ ranges over all lines in $P_k^d$. Given a vector $w$ of $W$, a vertex $x$ of $P_k^d$ and a line $\mathcal{L}$ of $P_k^d$, let $\pi_x(w)$ denote the projection of $w$ onto the copy of $\mathbb{R}^{m-1}$ corresponding to $x$ and let $\pi_\mathcal{L}(w)$ denote the projection of $w$ onto the copy of $\mathbb{R}^{r-2}$ corresponding to $\mathcal{L}$. Note that $w$ is determined by its projections.

Let $Z := \{z_1,\dots,z_d\}$ be a collection of $d$ vectors of $\mathbb{R}^{m-1}$ in general position. Also, let $Y:=\{y_1,\dots,y_{k-2}\}$ be a set of $k-2$ vectors of $\mathbb{R}^{r-2}$ such that $y_1,\dots,y_{r-2}$ are linearly independent and, for $r-1\leq t\leq k-1$, 
\begin{equation}\label{ydef}y_t := -\sum_{j=t-r+2}^{t-1}y_j.\end{equation}
Thus any consecutive $r-2$ vectors $y_i, \ldots, y_{i+ r-2}$ are linearly independent and any consecutive $r-1$ vectors $y_j, \ldots, y_{j+r-1}$ sum to zero.
Suppose that $e=uv$ is an edge of $P_k^d$ such that $u$ and $v$ differ on coordinate $i\in[d]$ and let $t$ be the maximum of the $i$th coordinates of $u$ and $v$. Further, let $\mathcal{L}$ be the unique line of $P_k^d$ containing $e$. We define $f_e$ to be the vector of $W$ such that 
\begin{itemize}
\item $\pi_u(f_e)=\pi_v(f_e) = z_i$ and $\pi_x(f_e)=0$ for every $x\in V(Q_d)\setminus\{u,v\}$, and
\item $\pi_\mathcal{L} = y_t$ and $\pi_{\mathcal{L}'} =0$ for every line $\mathcal{L}'\neq\mathcal{L}$.
\end{itemize}

In order to apply Lemma~\ref{LinAlgLemma}, we need to show that for every axis aligned copy $P$ of $P_r^m$ in $P_k^d$ there are non-zero scalars $\left\{c_e:e\in E(P)\right\}$ such that $\sum_{e\in E(P)}c_ef_e=0$. Let $P$ be an axis aligned copy of $P_r^m$ in $P_k^d$ and let $I\subseteq[d]$ be the set of $m$ coordinates which vary under $P$. Let $\{c_i: i\in I\}$ be a set of scalars such that $\sum_{i\in I}c_iz_i=0$. Note that $c_i\neq0$ for all $i\in I$ since the vectors of $Z$ are in general position.  

For each vertex $v$ of $P$, let $M(v)\subseteq [d]$ be the set of indices $j$ such that both $v-e_j$ and $v+e_j$ are contained in $P$. Define the \emph{lines} of $P$ to be the paths of length $r-1$ in $P$ obtained by taking the intersection of a line of $P_k^d$ with $V(P)$. Note that $|M(v)|$ is precisely the number of lines of $P$ in which $v$ has degree two. For each line $\mathcal{L}$ of $P$, define $m(\mathcal{L}):=|M(v)|$ where $v$ is an endpoint of $\mathcal{L}$ (clearly the endpoints give the same value). For each $i\in I$, let $E_i$ be the set of all edges of $P$ for which $i$ is the variable coordinate. For each edge $e\in E_i$ contained in a line $\mathcal{L}$ of $P$, define $d_e:=2^{m(\mathcal{L})}c_i$. We claim that $\sum_{e\in E(P)}d_ef_e=0$. 

First, let $\mathcal{L}$ be a line of $P$ in direction $i$. Then, for some $t\geq r-1$, we have
\[\pi_{\mathcal{L}}\left(\sum_{e\in E(P)}d_ef_e\right) = \sum_{e\in E(P)}d_e\pi_\mathcal{L}(f_e)= \sum_{e\in E(\mathcal{L})}2^{m(\mathcal{L})}c_i\pi_\mathcal{L}(f_e) = 2^{m(\mathcal{L})} c_i\sum_{j=t-r+2}^{t}y_j.\]
However, this sum is equal to zero by (\ref{ydef}).

Now, fix a vertex $v$ of $P$ and for each $i\in I$ let $\mathcal{L}_i$ be the line of $P$ in direction $i$ containing $v$. We have
\[\pi_v\left(\sum_{e\in E(P)}d_e f_e\right) = \sum_{e\in E(P)}d_e\pi_v(f_e)=\sum_{i\in M(v)}\sum_{e\in E_i}2^{m(\mathcal{L}_i)}c_i\pi_v(f_e) + \sum_{i\in I\setminus M(v)}\sum_{e\in E_i}2^{m(\mathcal{L}_i)}c_i\pi_v(f_e)\]
which is equal to
\[\sum_{i\in M(v)}2^{m(\mathcal{L}_i)+1}c_i\pi_v(f_e) + \sum_{i\in I\setminus M(v)}2^{m(\mathcal{L}_i)}c_i\pi_v(f_e)\]
by definition of $M(v)$. We observe that, for any $i\in M(v)$, we have $m(\mathcal{L}_i)=|M(v)|-1$ and for any $i\in I\setminus M(v)$ we have $m(\mathcal{L}_i)=|M(v)|$. Therefore, the above sum is equal to $2^{|M(v)|}\sum_{i\in I}c_iz_i$ which is zero by our choice of $\{c_i:i\in I\}$. Combining this with the result of the previous paragraph, we get $\sum_{e\in E(P)}d_ef_e=0$, as desired.

To complete the proof, it suffices to prove (\ref{dimBound}). To do so, we let $G$ be a graph as in the proof of the upper bound and show that the vectors of $\left\{f_e: e\in E(G)\right\}$ are linearly independent. Let $\{c_e: e\in E(G)\}$ be any set of scalars, not all of which are zero, and let $F$ be the spanning subgraph of $G$ containing all edges $e\in E(G)$ such that $c_e\neq 0$. Let $v$ be a vertex of non-zero degree in $F$ such that $|v|$ is maximum. 

First, consider the case that $v$ has degree at most $m-1$ in $F$. Let $J$ denote the set of $d_F(v)$ coordinates such that, for each $j\in J$, the edge from $v$ to $v-e_j$ is present in $F$. Then $\sum_{e\in E(G)}c_e\pi_v(f_e)$ is a linear combination of the vectors in $\{z_j: j\in J\}$ in which not all of the coefficients are zero. Since $|J|\leq m-1$ and the vectors of $Z$ are in general position, this sum is non-zero and therefore $\sum_{e\in E(G)}c_ef_e\neq 0$. 

Now, suppose that $v$ has degree at least $m$ in $F$. By construction of $G$, this implies that there is a coordinate $j$ which is small for $v$ such that the edge $e$ from $v$ to $v-e_j$ is present in $F$. Let $\mathcal{L}$ be the unique line of $P_k^d$ containing $e$. Then, by maximality of $|v|$, every edge of $\mathcal{L}$ contained in $F$ joins two vertices for which $j$ is a small coordinate. This implies that $\sum_{e\in E(F)}c_e\pi_\mathcal{L}(f_e)$ is a linear combination of the vectors in $\{y_i: 1\leq i\leq r-1\}$ in which not all of the coefficients are zero. Thus, since the vectors $y_1,\dots,y_{r-1}$ are linearly independent, this sum is non-zero and we obtain $\sum_{e\in E(G)}c_ef_e\neq 0$, which completes the proof.
\end{proof}

\begin{rem}
The proof of Theorem~\ref{wsatThm} implies that, for most values of $k\geq r\geq 2$ and $d\geq m\geq1$, there are many tight examples. For an example in two dimensions, see Figure~\ref{gridsFig}. 
\end{rem}

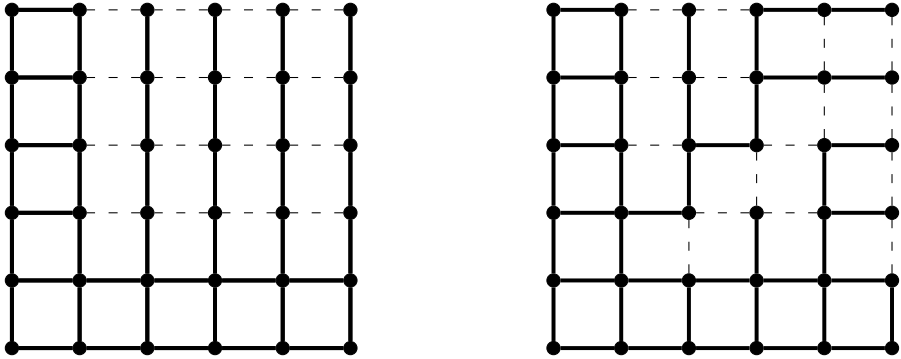
\begin{figure}[htbp]
\begin{center}
\begin{tikzpicture}[label distance=2.0mm]
    \tikzstyle{every node}=[draw,circle,fill=black,minimum size=5pt,
                            inner sep=0pt]


\draw (0,0) node (v00) {}
++ (0:0.9cm) node (v01) {}
++ (0:0.9cm) node (v02) {}
++ (0:0.9cm) node (v03) {}
++ (0:0.9cm) node (v04) {}
++ (0:0.9cm) node (v05) {}
++ (90:0.9cm) node (v15) {}
++ (180:0.9cm) node (v14) {}
++ (180:0.9cm) node (v13) {}
++ (180:0.9cm) node (v12) {}
++ (180:0.9cm) node (v11) {}
++ (180:0.9cm) node (v10) {}
++ (90:0.9cm) node (v20) {}
++ (0:0.9cm) node (v21) {}
++ (0:0.9cm) node (v22) {}
++ (0:0.9cm) node (v23) {}
++ (0:0.9cm) node (v24) {}
++ (0:0.9cm) node (v25) {}
++ (90:0.9cm) node (v35) {}
++ (180:0.9cm) node (v34) {}
++ (180:0.9cm) node (v33) {}
++ (180:0.9cm) node (v32) {}
++ (180:0.9cm) node (v31) {}
++ (180:0.9cm) node (v30) {}
++ (90:0.9cm) node (v40) {}
++ (0:0.9cm) node (v41) {}
++ (0:0.9cm) node (v42) {}
++ (0:0.9cm) node (v43) {}
++ (0:0.9cm) node (v44) {}
++ (0:0.9cm) node (v45) {}
++ (90:0.9cm) node (v55) {}
++ (180:0.9cm) node (v54) {}
++ (180:0.9cm) node (v53) {}
++ (180:0.9cm) node (v52) {}
++ (180:0.9cm) node (v51) {}
++ (180:0.9cm) node (v50) {};

\draw[line width = 1.5pt] (v01)--(v00);
\draw[line width = 1.5pt] (v02)--(v01);
\draw[line width = 1.5pt] (v03)--(v02);
\draw[line width = 1.5pt] (v04)--(v03);
\draw[line width = 1.5pt] (v05)--(v04);
\draw[line width = 1.5pt] (v00)--(v10);
\draw[line width = 1.5pt] (v01)--(v11);
\draw[line width = 1.5pt] (v11)--(v10);
\draw[line width = 1.5pt] (v02)--(v12);
\draw[line width = 1.5pt] (v12)--(v11);
\draw[line width = 1.5pt] (v03)--(v13);
\draw[line width = 1.5pt] (v13)--(v12);
\draw[line width = 1.5pt] (v04)--(v14);
\draw[line width = 1.5pt] (v14)--(v13);
\draw[line width = 1.5pt] (v05)--(v15);
\draw[line width = 1.5pt] (v15)--(v14);
\draw[line width = 1.5pt] (v10)--(v20);
\draw[line width = 1.5pt] (v11)--(v21);
\draw[line width = 1.5pt] (v21)--(v20);
\draw[line width = 1.5pt] (v12)--(v22);
\draw[ dash pattern=on 0.12cm off 0.18cm] (v22)--(v21);
\draw[line width = 1.5pt] (v13)--(v23);
\draw[ dash pattern=on 0.12cm off 0.18cm] (v23)--(v22);
\draw[line width = 1.5pt] (v14)--(v24);
\draw[ dash pattern=on 0.12cm off 0.18cm] (v24)--(v23);
\draw[line width = 1.5pt] (v15)--(v25);
\draw[ dash pattern=on 0.12cm off 0.18cm] (v25)--(v24);
\draw[line width = 1.5pt] (v20)--(v30);
\draw[line width = 1.5pt] (v21)--(v31);
\draw[line width = 1.5pt] (v31)--(v30);
\draw[line width = 1.5pt] (v22)--(v32);
\draw[ dash pattern=on 0.12cm off 0.18cm] (v32)--(v31);
\draw[line width = 1.5pt] (v23)--(v33);
\draw[ dash pattern=on 0.12cm off 0.18cm] (v33)--(v32);
\draw[line width = 1.5pt] (v24)--(v34);
\draw[ dash pattern=on 0.12cm off 0.18cm] (v34)--(v33);
\draw[line width = 1.5pt] (v25)--(v35);
\draw[ dash pattern=on 0.12cm off 0.18cm] (v35)--(v34);
\draw[line width = 1.5pt] (v30)--(v40);
\draw[line width = 1.5pt] (v31)--(v41);
\draw[line width = 1.5pt] (v41)--(v40);
\draw[line width = 1.5pt] (v32)--(v42);
\draw[ dash pattern=on 0.12cm off 0.18cm] (v42)--(v41);
\draw[line width = 1.5pt] (v33)--(v43);
\draw[ dash pattern=on 0.12cm off 0.18cm] (v43)--(v42);
\draw[line width = 1.5pt] (v34)--(v44);
\draw[ dash pattern=on 0.12cm off 0.18cm] (v44)--(v43);
\draw[line width = 1.5pt] (v35)--(v45);
\draw[ dash pattern=on 0.12cm off 0.18cm] (v45)--(v44);
\draw[line width = 1.5pt] (v40)--(v50);
\draw[line width = 1.5pt] (v41)--(v51);
\draw[line width = 1.5pt] (v51)--(v50);
\draw[line width = 1.5pt] (v42)--(v52);
\draw[ dash pattern=on 0.12cm off 0.18cm] (v52)--(v51);
\draw[line width = 1.5pt] (v43)--(v53);
\draw[ dash pattern=on 0.12cm off 0.18cm] (v53)--(v52);
\draw[line width = 1.5pt] (v44)--(v54);
\draw[ dash pattern=on 0.12cm off 0.18cm] (v54)--(v53);
\draw[line width = 1.5pt] (v45)--(v55);
\draw[ dash pattern=on 0.12cm off 0.18cm] (v55)--(v54);

\draw[line width = 1.5pt] (v01)--(v00);
\draw[line width = 1.5pt] (v02)--(v01);
\draw[line width = 1.5pt] (v03)--(v02);
\draw[line width = 1.5pt] (v04)--(v03);
\draw[line width = 1.5pt] (v05)--(v04);
\draw[line width = 1.5pt] (v00)--(v10);
\draw[line width = 1.5pt] (v01)--(v11);
\draw[line width = 1.5pt] (v11)--(v10);
\draw[line width = 1.5pt] (v02)--(v12);
\draw[line width = 1.5pt] (v12)--(v11);
\draw[line width = 1.5pt] (v03)--(v13);
\draw[line width = 1.5pt] (v13)--(v12);
\draw[line width = 1.5pt] (v04)--(v14);
\draw[line width = 1.5pt] (v14)--(v13);
\draw[line width = 1.5pt] (v05)--(v15);
\draw[line width = 1.5pt] (v15)--(v14);
\draw[line width = 1.5pt] (v10)--(v20);
\draw[line width = 1.5pt] (v11)--(v21);
\draw[line width = 1.5pt] (v21)--(v20);
\draw[line width = 1.5pt] (v12)--(v22);
\draw[line width = 1.5pt] (v13)--(v23);
\draw[line width = 1.5pt] (v14)--(v24);
\draw[line width = 1.5pt] (v15)--(v25);
\draw[line width = 1.5pt] (v20)--(v30);
\draw[line width = 1.5pt] (v21)--(v31);
\draw[line width = 1.5pt] (v31)--(v30);
\draw[line width = 1.5pt] (v22)--(v32);
\draw[line width = 1.5pt] (v23)--(v33);
\draw[line width = 1.5pt] (v24)--(v34);
\draw[line width = 1.5pt] (v25)--(v35);
\draw[line width = 1.5pt] (v30)--(v40);
\draw[line width = 1.5pt] (v31)--(v41);
\draw[line width = 1.5pt] (v41)--(v40);
\draw[line width = 1.5pt] (v32)--(v42);
\draw[line width = 1.5pt] (v33)--(v43);
\draw[line width = 1.5pt] (v34)--(v44);
\draw[line width = 1.5pt] (v35)--(v45);
\draw[line width = 1.5pt] (v40)--(v50);
\draw[line width = 1.5pt] (v41)--(v51);
\draw[line width = 1.5pt] (v51)--(v50);
\draw[line width = 1.5pt] (v42)--(v52);
\draw[line width = 1.5pt] (v43)--(v53);
\draw[line width = 1.5pt] (v44)--(v54);
\draw[line width = 1.5pt] (v45)--(v55);

\draw (7.2,0) node (u00) {}
++ (0:0.9cm) node (u01) {}
++ (0:0.9cm) node (u02) {}
++ (0:0.9cm) node (u03) {}
++ (0:0.9cm) node (u04) {}
++ (0:0.9cm) node (u05) {}
++ (90:0.9cm) node (u15) {}
++ (180:0.9cm) node (u14) {}
++ (180:0.9cm) node (u13) {}
++ (180:0.9cm) node (u12) {}
++ (180:0.9cm) node (u11) {}
++ (180:0.9cm) node (u10) {}
++ (90:0.9cm) node (u20) {}
++ (0:0.9cm) node (u21) {}
++ (0:0.9cm) node (u22) {}
++ (0:0.9cm) node (u23) {}
++ (0:0.9cm) node (u24) {}
++ (0:0.9cm) node (u25) {}
++ (90:0.9cm) node (u35) {}
++ (180:0.9cm) node (u34) {}
++ (180:0.9cm) node (u33) {}
++ (180:0.9cm) node (u32) {}
++ (180:0.9cm) node (u31) {}
++ (180:0.9cm) node (u30) {}
++ (90:0.9cm) node (u40) {}
++ (0:0.9cm) node (u41) {}
++ (0:0.9cm) node (u42) {}
++ (0:0.9cm) node (u43) {}
++ (0:0.9cm) node (u44) {}
++ (0:0.9cm) node (u45) {}
++ (90:0.9cm) node (u55) {}
++ (180:0.9cm) node (u54) {}
++ (180:0.9cm) node (u53) {}
++ (180:0.9cm) node (u52) {}
++ (180:0.9cm) node (u51) {}
++ (180:0.9cm) node (u50) {};

\draw[line width = 1.5pt] (u01)--(u00);
\draw[line width = 1.5pt] (u02)--(u01);
\draw[line width = 1.5pt] (u03)--(u02);
\draw[line width = 1.5pt] (u04)--(u03);
\draw[line width = 1.5pt] (u05)--(u04);
\draw[line width = 1.5pt] (u00)--(u10);
\draw[line width = 1.5pt] (u01)--(u11);
\draw[line width = 1.5pt] (u11)--(u10);
\draw[line width = 1.5pt] (u02)--(u12);
\draw[line width = 1.5pt] (u12)--(u11);
\draw[line width = 1.5pt] (u03)--(u13);
\draw[line width = 1.5pt] (u13)--(u12);
\draw[line width = 1.5pt] (u04)--(u14);
\draw[line width = 1.5pt] (u14)--(u13);
\draw[line width = 1.5pt] (u05)--(u15);
\draw[line width = 1.5pt] (u15)--(u14);
\draw[line width = 1.5pt] (u10)--(u20);
\draw[line width = 1.5pt] (u11)--(u21);
\draw[line width = 1.5pt] (u21)--(u20);
\draw[line width = 1.5pt] (u22)--(u21);
\draw[ dash pattern=on 0.12cm off 0.18cm] (u12)--(u22);
\draw[line width = 1.5pt] (u13)--(u23);
\draw[ dash pattern=on 0.12cm off 0.18cm] (u23)--(u22);
\draw[line width = 1.5pt] (u14)--(u24);
\draw[ dash pattern=on 0.12cm off 0.18cm] (u24)--(u23);
\draw[line width = 1.5pt] (u25)--(u24);
\draw[ dash pattern=on 0.12cm off 0.18cm] (u15)--(u25);
\draw[line width = 1.5pt] (u20)--(u30);
\draw[line width = 1.5pt] (u21)--(u31);
\draw[line width = 1.5pt] (u31)--(u30);
\draw[line width = 1.5pt] (u22)--(u32);
\draw[ dash pattern=on 0.12cm off 0.18cm] (u32)--(u31);
\draw[line width = 1.5pt] (u33)--(u32);
\draw[ dash pattern=on 0.12cm off 0.18cm] (u23)--(u33);
\draw[line width = 1.5pt] (u24)--(u34);
\draw[ dash pattern=on 0.12cm off 0.18cm] (u34)--(u33);
\draw[line width = 1.5pt] (u35)--(u34);
\draw[ dash pattern=on 0.12cm off 0.18cm] (u25)--(u35);
\draw[line width = 1.5pt] (u30)--(u40);
\draw[line width = 1.5pt] (u31)--(u41);
\draw[line width = 1.5pt] (u41)--(u40);
\draw[line width = 1.5pt] (u32)--(u42);
\draw[ dash pattern=on 0.12cm off 0.18cm] (u42)--(u41);
\draw[line width = 1.5pt] (u33)--(u43);
\draw[ dash pattern=on 0.12cm off 0.18cm] (u43)--(u42);
\draw[line width = 1.5pt] (u44)--(u43);
\draw[ dash pattern=on 0.12cm off 0.18cm] (u34)--(u44);
\draw[line width = 1.5pt] (u45)--(u44);
\draw[ dash pattern=on 0.12cm off 0.18cm] (u35)--(u45);
\draw[line width = 1.5pt] (u40)--(u50);
\draw[line width = 1.5pt] (u41)--(u51);
\draw[line width = 1.5pt] (u51)--(u50);
\draw[line width = 1.5pt] (u42)--(u52);
\draw[ dash pattern=on 0.12cm off 0.18cm] (u52)--(u51);
\draw[line width = 1.5pt] (u43)--(u53);
\draw[ dash pattern=on 0.12cm off 0.18cm] (u53)--(u52);
\draw[line width = 1.5pt] (u54)--(u53);
\draw[ dash pattern=on 0.12cm off 0.18cm] (u44)--(u54);
\draw[line width = 1.5pt] (u55)--(u54);
\draw[dash pattern=on 0.12cm off 0.18cm] (u45)--(u55);

\end{tikzpicture}
\end{center}
\caption{Two non-isomorphic weakly $\left(P_6^2,P_3^2\right)$-saturated graphs with $\wsat\left(P_6^2,P_3^2\right)$ edges.}
\label{gridsFig}
\end{figure}

\subsection{Cycles in the Grid}

We prove Theorem~\ref{cycleThm} using an elementary argument. 

\begin{proof}[Proof of Theorem~\ref{cycleThm}]
First, notice that $\wsat\left(P_k^d,C_{2\ell}\right)\geq k^d-1$ holds since any weakly $\left(P_k^d,C_{2\ell}\right)$-saturated graph must be connected. To prove the upper bound, we proceed by induction on $d+k$. 

Consider the base case $k=2$ and $d=\ell$. For $0\leq t\leq \ell$, let $L_t$ be the set of all vertices $v$ of $Q_\ell$ with $|v|=t$ and define $x_t$ to be the vertex of $L_t$  which consists of $t$ ones followed by  $\ell-t$ zeros. We let $G$ be the graph such that, for $1\leq t\leq \ell$, each vertex $v\in L_t$ is joined to a unique vertex of $L_{t-1}$ which is chosen in the following way:
\begin{itemize}
\item if $t=1$ or $v=x_t$, then $v$ is joined to $x_{t-1}$. 
\item if $t\geq 2$ and $v\neq x_t$,  then $v$ is joined to an arbitrary vertex $y\in L_{t-1}\setminus\{x_{t-1}\}$ such that $vy\in E(Q_\ell)$. 
\end{itemize}
It is clear that $G$ is a tree and so $|E(G)|=2^\ell-1$. For each vertex $v$, let $P_v$ be the unique path in $G$ from $v$ to $x_0$. Note that, by construction, if $v\notin \{x_1,\dots,x_\ell\}$ then $V(P_v)\cap \{x_1,\dots,x_\ell\}=\emptyset$. 

We show that $G$ is weakly $(Q_\ell,C_{2\ell})$-saturated. First, add each edge $x_\ell v$ where $v\in L_{\ell-1}\setminus\{x_{\ell-1}\}$. Each of these edges creates a copy of $C_{2\ell}$ by taking the paths $P_{x_\ell}$ and $P_{v}$ along with the edge $x_\ell v$. 

Next, for each $t=\ell-1,\dots,2$, in turn, we add every edge $uv$ where $u\in L_t\setminus\{x_t\}$ and $v\in L_{t-1}\setminus\{x_{t-1}\}$. When doing so we can assume, inductively, that for each $s$ such that $t<s\leq \ell$ every edge from $L_s\setminus\{x_s\}$ to $L_{s-1}\setminus\{x_{s-1}\}$ is present. In particular, this implies that there is a path $R_u$ of length $\ell-t$ from $u$ to $x_\ell$ which does not contain any vertex of $\{x_0,\dots,x_{\ell-1}\}$. Therefore, when adding the edge $uv$, we obtain a copy of $C_{2\ell}$ by taking the paths $P_v$, $P_{x_\ell}$, $R_u$ and the edge $uv$. 

After this, for each $t=\ell-1,\dots,2$, in turn, we add every edge $ux_{t-1}$ where $u\in L_t\setminus\{x_t\}$. In this case, let $w$ be a vertex of $L_t\setminus\{x_t,u\}$ such that there are paths $R_u$ and $R_w$ from $u$ to $x_\ell$ and $w$ to $x_\ell$, respectively, such that $V(R_u)\cap V(R_w)=\{x_\ell\}$. We obtain a copy of $C_{2\ell}$ by taking the paths $R_u$, $R_w$, $P_w$ and $P_{x_{t-1}}$ along with the edge $ux_{t-1}$.

Finally, for each $t=\ell-1,\dots,2$, in turn, we add every edge $x_tv$ where $v\in L_{t-1}\setminus\{x_{t-1}\}$. Let $j$ be an index on which $v$ is zero and let $P$ be a path from $x_\ell$ to $x_0$ which contains $e_j$ but is disjoint from $\{x_1,\dots,x_{\ell-1}\}$. Notice that $P$ does not contain any vertex of $P_v$. We obtain a copy of $C_{2\ell}$ by taking the paths $x_tx_{t+1}\dots x_\ell$, $P$, and $P_v$ along with the edge $x_tv$. This completes the proof of the base case. 

Now, we assume that $d+k>\ell+2$ and that the proposition holds for smaller values of $d+k$. We divide the proof into two cases. 

\begin{case2}
$d>\ell$.
\end{case2}

Let $G_{d-1}$ be a weakly $\left(P_k^{d-1},C_{2\ell}\right)$-saturated graph with $k^{d-1}-1$ edges. We construct a weakly $\left(P_k^d,C_{2\ell}\right)$-saturated graph $G$. For each vertex $v$ of $P_k^d$, let $v_{d-1}$ be the restriction of $v$ to its first $d-1$ coordinates and let $v_d$ be the last coordinate of $v$. In constructing $G$, we add every edge $uv$ of $P_k^d$ such that
\begin{itemize}
\item $u_{d-1}v_{d-1}\in E(G_{d-1})$ and $u_{d}=v_{d}=0$, or
\item $u_{d-1}=v_{d-1}$. 
\end{itemize}
It is clear that $|E(G)|=k^d-1$. For $0\leq t\leq k-1$, let $S_t$ be the set of all vertices $v$ such that $v_d=t$. Note that $S_t$ induces a copy of $P_k^{d-1}$ in $P_k^d$. 

Let us show that $G$ is weakly $\left(P_k^d,C_{2\ell}\right)$-saturated. First, add all edges $uv$ of $P_k^d$ where $u,v\in S_0$ in some order such that each added edge creates a new copy of $C_{2\ell}$. This is possible by definition of $G_{d-1}$. Now, for each $t=1,\dots,k-1$, in turn, we add all edges between vertices of $S_t$. By induction on $t$ we can assume that all edges between vertices in $S_{t-1}$ are already present. Given an edge $uv$ where $u,v\in S_t$, let $u',v'\in S_{t-1}$ where $u'_{d-1}=u_{d-1}$ and $v'_{d-1}=v_{d-1}$. We obtain a copy of $C_{2\ell}$ by taking the edges $u'u$, $uv$, $vv'$ and a path of length $2\ell-3$ from $v'$ to $u'$ in $S_{t-1}$. This completes the proof in this case. 

\begin{case2}
$k>2$.
\end{case2}

Let $G_{k-1}$ be a weakly $\left(P_{k-1}^{d},C_{2\ell}\right)$-saturated graph with $(k-1)^{d}-1$ edges. For each set $S\subseteq [d]$,  let $m(S)$ be the minimum element of $S$ and let $Y_S$ be the set of all vertices $v$ of $P_k^d$ such that $S$ is precisely the set of coordinates on which $v$ is equal to $k-1$. We add to $G$ every edge $uv$ of $P_k^d$ such that
\begin{itemize}
\item $u,v\in Y_\emptyset$ and $uv\in E(G_{k-1})$, or
\item $u\in Y_S$ and $v\in Y_{S\setminus\{m(S)\}}$ for some non-empty $S\subseteq [d]$. 
\end{itemize}
It is clear that $|E(G)|=k^d-1$. 

Let us show that $G$ is weakly $\left(P_k^d,C_{2\ell}\right)$-saturated. First, add all edges $uv$ where $u,v\in Y_\emptyset$ such that each added edge creates a new copy of $C_{2\ell}$. This is possible by definition of $G_{k-1}$. 

Next, for each non-empty set $S\subseteq [d]$, in order of increasing cardinality, add all edges $uv$ where $u,v\in Y_S$. Inductively, we can assume that all edges between vertices of $Y_{S\setminus\{m(S)\}}$ are already present. Note that $Y_{S\setminus\{m(S)\}}$ induces a copy of $P_{k-1}^{d-|S\setminus\{m(S)\}|}$ where $|S\setminus\{m(S)\}|\leq d-2$. Let $u'$ and $v'$ be the unique neighbours of $u$ and $v$ in $Y_{S\setminus\{m(S)\}}$, respectively. We obtain a copy of $C_{2\ell}$ by taking the edges $u'u$, $uv$, $vv'$ and a path of length $2\ell-3$ from $v'$ to $u'$ in $Y_{S\setminus\{m(S)\}}$.

Finally, for each set $S$, in order of increasing cardinality, and element $j$ of $S\setminus\{m(S)\}$, add every edge $uv$ where $u\in Y_S$ and $v$ differs from $u$ on coordinate $j$. Let $u'$ be the neighbour of $u$ in $Y_{S\setminus\{m(S)\}}$ and let $v'$ be the neighbour of $v$ in $Y_{S\setminus\{j,m(S)\}}$. If $\ell=2$, then we obtain a copy of $C_4$ by taking the edges $u'u$, $uv$, $vv'$ and $u'v'$. Otherwise, let $w$ be the vertex of $Y_{S\setminus\{m(S)\}}$ obtained from $u'$ by decreasing coordinate $m(S)$ by one. Further, let $w'$ be vertex of $Y_{S\setminus\{j,m(S)\}}$ obtained from $w$ by decreasing coordinate $j$ by one. We obtain a copy of $C_{2\ell}$ by taking the edges $w'w$, $wu'$, $u'u$, $uv$, $vv'$ and a path of length $2\ell-5$ from $v'$ to $w'$ in $Y_{S\setminus\{j,m(S)\}}$. Note that this is possible since $w'$ is adjacent to $v'$ and the subgraph induced by $Y_{S\setminus\{j,m(S)\}}$ is isomorphic to $P_{k-1}^{d-|S\setminus\{j,m(S)\}|}$. This completes the proof. 
\end{proof}

\section{Open Problems}\label{concl}

Many interesting open problems remain, and we mention some of them here.

\subsection{Minimum Saturation}

Recall that we were able to prove a better upper bound on $\sat(Q_d,Q_m)$  when $d$ is of the form $6m(2^t-1)$ than for general values of $d$ (see Remark~\ref{speciald}). It is not clear whether this is simply an artifact of our proof, or part of a more general phenomenon. We ask the following.

\begin{ques}
For fixed $m\geq2$, does $\lim_{d\to\infty}\frac{\sat(Q_d,Q_m)}{2^d}$ exist?
\end{ques}

As it stands, the best known lower bound on $\sat(Q_d,Q_m)$ is $(m-1-o(1))2^d$, which follows from Theorem~\ref{wsatThmQ} and (\ref{wsatlower}). We doubt that this bound is tight. We ask the following. 

\begin{ques}
For fixed $m\geq2$, let $c_m := \liminf_{d\to\infty} \sat(Q_d,Q_m)/2^d.$ Does $\frac{c_m}{m} \rightarrow \infty$?
\end{ques}

It would also be interesting to study saturation numbers in the grid.  

\begin{ques}
Fix $m\geq 2$, $r\geq 2$ and $k\geq\max\{r,3\}$, and let $d \rightarrow \infty$. Is it true that
\[\sat^*\left(P_k^d,P_r^m\right) = o\left(\left|E\left(P_k^d\right)\right|\right)?\]
How about
\[\sat^*\left(P_k^d,P_r^m\right) = O\left(k^d\right)?\]
\end{ques}

Here, the notation $\sat^*$ indicates that we are considering axis aligned copies of $P_r^m$ in $P_k^d$.

Moving beyond the finite case, one could also consider grids inside the infinite square lattice ${\mathbb Z}^d$, where vertices are adjacent if they
have $\ell_1$-distance 1.

\begin{ques}
For $d\ge m\ge 2$, what is the infimal density of a  $Q_m$-saturated subgraph of the $d$-dimensional square lattice?  
More generally, what about the $d$-dimensional lattice that is saturated with respect to axis aligned copies of $P_r^m$?
\end{ques}

Here the density is naturally defined as the $\limsup$ of densities inside large boxes.  For both of the last two questions, it would also be interesting to know what happens if we do not require our (sub)grids to be axis aligned. 

Recall that $K_k^d$ is the graph with vertex set $\{0,\dots,k-1\}^d$ in which two vertices are adjacent if they differ (by any amount) in exactly one coordinate. In one of the original papers on minimum saturated graphs, Erd\H{o}s, Hajnal and Moon~\cite{EHM} determined $\sat(K_k,K_r)$ for all values of $k$ and $r$. One could also consider the following multidimensional analogue.

\begin{prob}
\label{prodclique}
For $k\geq r\geq 2$ and $d\geq m\geq1$, determine $\sat\left(K_k^d,K_r^m\right)$.
\end{prob}

The problem of determining $\sat\left(K_k,C_{\ell}\right)$ was raised by Bollob\'{a}s~\cite{BollobookExtremal} and has been extensively studied. The case $\ell=3$ is trivial, and the cases $\ell=4$ and $\ell=5$ were solved by Ollmann~\cite{Ollmann} and Chen~\cite{C5,AllC5}, respectively. The first non-trivial bounds for general $\ell$ were given by Barefoot, Clark, Entringer, Porter, Sz{\'e}kely, and Tuza~\cite{Barefoot}, who proved that there are positive constants $c_1$ and $c_2$ such that, for $\ell\neq 8,10$,
\[(1+c_1/\ell)n\leq \sat\left(K_k,C_{\ell}\right)\leq (1+c_2/\ell)n.\]
The value of $c_2$ was improved by Gould,{ \L}uczak and Schmitt~\cite{Luczak}. Currently, the best general upper and lower bounds are due to F{\"u}redi and Kim~\cite{FurediCycleSat}. It is natural to ask about cycles in the hypercube.  

\begin{prob}
\label{cycleProb}
For $\ell\geq 2$ and $d\geq \log_2(2\ell)$, determine $\sat(Q_d,C_{2\ell})$.
\end{prob}
More generally, similar questions can be asked for cycles in $P_k^d$ or $K_k^d$.
We remark that the related problem of determining $\ex\left(Q_d,C_{2\ell}\right)$ was proposed by Erd\H{o}s~\cite{Erdos} and is very well studied; see, e.g.,~\cite{Baber,Hong,Brass,Chung,Conder,Conlon,Furedi2,Furedi}.

\subsection{Weak Saturation}

Recall that Theorem~\ref{wsatThm} provides the explicit value of the weak saturation number of axis aligned copies of $P_r^m$ in $P_k^d$. However, for most values of $m,d,r,k$, the graph that we construct contains several `bent' copies of $P_r^m$. We ask about the weak saturation number of general copies of $P_r^m$ in $P_k^d$. 

\begin{prob}
For general $1\leq m\leq d$ and $2\leq r\leq k$, determine $\wsat\left(P_k^d,P_r^m\right)$. 
\end{prob}

In~\cite{BollobookExtremal}, Bollob\'{a}s conjectured that $\wsat(K_k,K_r)=\sat(K_k,K_r)$ for all $k$ and $r$. This was proved for $r< 7$ in~\cite{wsat} and for general $r$ by Alon~\cite{Alonwsat} and Kalai~\cite{Kalai,Hypercon} using methods from exterior algebra and matroid theory. A multidimensional version of this problem also seems interesting. 

\begin{prob}
\label{wsatK}
For general $1\leq m\leq d$ and $2\leq r\leq k$, determine $\wsat\left(K_k^d,K_r^m\right)$.
\end{prob}

As was mentioned in Section~\ref{wsatSection}, the `vertex version' of Problem~\ref{wsatK} was solved in~\cite{LinAlg}. 

Theorem~\ref{cycleThm} leaves open the problem of determining $\wsat\left(P_k^d,C_{2\ell}\right)$ when $d<\ell$. We ask the following.

\begin{ques}
Given $k\geq 2$ and $\ell>d\geq2$ such that $P_k^d$ contains a cycle of length $2\ell$, what is the value of $\wsat\left(P_k^d,C_{2\ell}\right)$?
\end{ques}

\subsection{Semi-saturation}

Johnson and Pinto~\cite{hypercube} also study another type of saturation problem, which is often referred to as semi-saturation~\cite{mindeg,FurediCycleSat} (other terms have also been used: see \cite{BollobookExtremal,Kalai,kSperner,PikPhD, Tuza}). Given graphs $F$ and $H$, say that a spanning subgraph $G$ of $F$ is \emph{$(F,H)$-semi-saturated} if for every edge $e\in E(F)\setminus E(G)$, the graph $G+e$ contains more copies of $H$ than $G$ does. Note that $G$ may contain $H$ as a subgraph. Let $\ssat(F,H)$ denote the \emph{semi-saturation number}, which is the minimum number of edges in a $(F,H)$-semi-saturated graph. Then it is easy to see that
\[\wsat(F,H)\leq \ssat(F,H)\leq \sat(F,H).\]
Johnson and Pinto~\cite{hypercube} proved that $\ssat(Q_d,Q_m)\leq \left(m^2+m/2\right)2^d$. Based on this result and (\ref{Q2bound}), they asked whether $\ssat(Q_d,Q_2)$ is equal to $\sat(Q_d,Q_2)$ for sufficiently large $d$ and, if not, whether $\limsup_{d\to\infty} \frac{\sat(Q_d,Q_2)}{2^d} > \limsup_{d\to\infty}\frac{\ssat(Q_d,Q_2)}{2^d}$. Given Theorem~\ref{main}, it now seems natural to pose these questions more generally. 

\begin{ques}
For fixed $m\geq 2$, is $\ssat(Q_d,Q_m)$ is equal to $\sat(Q_d,Q_m)$ for sufficiently large $d$? If not, is $\limsup_{d\to \infty} \frac{\sat(Q_d,Q_m)}{2^d} > \limsup_{d\to\infty}\frac{\ssat(Q_d,Q_m)}{2^d}$?
\end{ques}

Of course, the open problems on saturation mentioned above also make sense for semi-saturation. 

\bibliography{QmSaturation}
  \bibliographystyle{amsplain}
\end{document}